\newtheorem{hyp}{Hypothesis}
\numberwithin{equation}{section}
\newtheorem{Theorem}{Theorem}
\newtheorem{Lemma}{Lemma}[section]
\newtheorem{Rem}{Remark}[section]
\newtheorem{Prop}{Proposition}[section]
\newtheorem{Cor}{Corollary}[section]
\newtheorem{Ex}{Example}[section]
\newcommand{\ca}{\color{black}}
\newcommand{\be}{\begin{equation}}
\newcommand{\ee}{\end{equation}}
\newcommand{\R}{\mathbb{R}}
\title{A singular infinite dimensional Hamilton-Jacobi-Bellman equation arising from a storage problem}
\author{Charles Bertucci$^1$, Jean-Michel Lasry$^2$, Pierre-Louis Lions$^{2,3}$}
\thanks{$^1$ : CMAP, Ecole Polytechnique, UMR 7641, 91120 Palaiseau, France\\
$^2$ :Universit\'e Paris-Dauphine, PSL Research University,UMR 7534, CEREMADE, 75016 Paris, France\\
$^3$ : Coll\`ege de France, 3 rue d'Ulm, 75005, Paris, France
}
\date{} 
\begin{document}

\maketitle

\begin{abstract}
In the first part of this paper, we derive an infinite dimensional partial differential equation which describes an economic equilibrium in a model of storage which includes an infinite number of non-atomic agents. This equation has the form of a mean field game master equation. The second part of the paper is devoted to the mathematical study of the Hamilton-Jacobi-Bellman equation from which the previous equation derives. This last equation is both singular and set on a Hilbert space and thus raises new mathematical difficulties.
\end{abstract}

\tableofcontents

\section{Introduction}
In this paper, we study a particular infinite dimensional Hamilton-Jacobi-Bellman (HJB in short) equation which arises in the modeling of an economic equilibrium problem. This problem, of a new type, arises from \ca the \color{black} interaction of a large number of facilities of storage between each other. The description of this model is the subject of the first part of this paper while the second one is concerned with a mathematical analysis of the HJB equation yielded by this modeling part.\\

The model which produces the equation of interest is concerned with the storage of a good in an infinite number of sites. At each site, an equilibrium takes place between supply, demand, storage and carriers who can bring the good from one site to the other.\\

Writing the equilibrium equations for the transfer of goods leads to an equation of the form of a Mean Field Game (MFG for short) master equation. We refer to \citep{lions2007cours,cardaliaguet2019master,bertucci2021monotone,bertucci2021monotone2} for more details on MFG master equations and to \citep{lasry2007mean,carmona2018probabilistic,carmona2018probabilistic2} for more on MFG. Let us insist on the fact that the MFG master equation arising from our modeling is not of the exact nature of most of the master equations studied in the literature because i) no precise game between the players is written, ii) the state variable is not the repartition of agents in the state space but rather the repartition of the product (or the good) in the state space. The fact that master equations can describe economic equilibrium outside of the usual MFG setting has already been remarked in \citep{bertucci2020mean,achdou2022class} and we believe it is a general feature of equilibrium models.

Since we are considering an infinite number of sites, the associated master equation is set on a Hilbert space. Quite remarkably, this equation derived from a HJB equation, which is thus naturally associated to the problem of a social planner. We focus our mathematical study on this singular HJB equation on a Hilbert space.\\

\ca
The study of HJB equations on Hilbert spaces dates back to early works from Barbu and Da Prato \citep{barbu1981global,barbu1981direct,barbu1983hamilton,barbu1985hamilton}. Those works rely on the convexity of the problem. An approach relying mostly on the regularizing properties of second order terms is presented in Da Prato and Zabczyk \citep{da2002second}, namely to deal with Lipschitz or convex Hamiltonians. Separately, the theory of viscosity solution was developed to treat non convex problems by Crandall and Lions \citep{crandall1985hamilton1,crandall1986hamilton2,crandall1986hamilton3}. Other important developments were made around the use of viscosity solution in infinite dimension, namely the study of the second order case by Lions \citep{lions1989viscosity} and the study of equations with some singular terms by Tataru \citep{tataru1992viscosity}. More recently, the textbook \citep{fabbri2017stochastic} by Fabbri, Gozzi and Swiech provides an overview of both recent developments involving viscosity solutions and the notion of so-called mild solutions, which is a way to define the solution of the HJB equation through stochastic representation formulas.

Let us also mention that in the last few years, the study of HJB equations in infinite dimensional space has attracted quite a lot of attention for the study of so-called potential MFG, see for instance \citep{lions2007cours,gangbo2015existence,cecchin2022weak}. Because of the particular nature of the singular HJB equation we are interested in, it is not of the same nature as most of the equations considered in this literature.\\

The equation at interest in this paper does not fall in the scope of the literature on infinite dimensional HJB equation, namely because of the presence of both a singular Hamiltonian and a full Laplacian in the Hilbert space. The mathematical analysis we provide to deal with these difficulties rely both on the theory of viscosity solutions and on the convexity of the solutions of the equation. Our strategy can be described simply by saying that we establish regularity estimates for finite dimensional approximations of the equation, which are uniform in the dimension, and then use the theory of viscosity solution to pass to the limit as the dimension goes to infinity. However, as we shall see, establishing the convergence relies on fine regularity properties of the solution because of the singularity of the problem\\

The rest of the paper is organized as follows. In a first time, we derive formally a singular infinite dimensional HJB equation from an Economic model of storage. In a second time, we study this HJB equation, namely by providing results of existence and uniqueness of an adequate notion of solution. \ca

\section{A toy model for storage in a large number of separated units}
\subsection{Description of the model}
There are $N \geq 2$ sites, one good and several populations of agents. On each site, there are local consumers and suppliers, local storage, and arbitrageurs. There are also carriers who transfer the good from site $n$ to site $n+1$, and vice versa ; moreover, $N+1$ is identified with $1$. \ca A good picture of these sites is that they are uniformly positioned on the unit circle.\ca\\

These agents interact through local markets, one market at each site. We denote by $p_{n,t}$ the price of the good on the site $n$ at time $t$, and by $k_{n,t}$ the level of storage of the good on the site $n$ at time $t$. In our model, we assume that $k_{n,t}$ can take any positive or negative value. In the real world, storage is bounded : the \ca lower \ca bound is often $0$ while the upper bound is due to physical or technical capacities. Moreover, in  many cases, for operational reasons, there is a high targeted level of storage. In these cases, our level of storage of $k_{n,t}$ would represent the difference between the actual level of storage and this high targeted level. Introducing lower and upper bounds for the storage, our model would become both more realistic, more interesting and much more difficult from a mathematical viewpoint as was highlighted in \citep{achdou2022class}. However the focus of the present paper is different, we want to show the effect that a multitude of sites of storage can have on the price.

Apart from shocks which are described below, the consumption flows are supposed to be given by demand functions $D_n(p_{n,t})$ and the supply flows are supposed to be given by supply functions $S_n(p_{n,t})$. \ca Let us insist on the fact that the previous demand function does not take into account the demand of arbitrageurs for the storage. \ca Additionally we assume that there are shocks on supply and demand so that the net supply, during a time interval of length $dt$, is given \ca for all time $t$ \ca by
\begin{equation}
(S_n(p_{n,t}) - D_n(p_{n,t}))dt + \sigma d W_{n,t}, 
\end{equation}
where $(W_n)_{n \geq 1}$ is a collection of $i.i.d.$ Brownian motions on a standard probability space.

\subsection{The equilibrium equations}
\ca As the only objective of this section is to derive an HJB equation (in particular we do not establish rigorous results), we make the two following assumptions.
\begin{itemize}
\item We assume that the state of this economy is simply given by the level of storage at each sites $K = (k_n)_{1\leq n\leq N}$. In particular, we assume that all the quantities are functions only of the state variable $K$. For instance we can write $p_{n,t} = p_n(K_t)$.
\item All the functions are smooth functions of $K$.
\end{itemize} 
\ca
We denote by $f_{n,t}$ the, algebraic, flow of transfer from $n$ to $n+1$ at time $t$. We assume that the global cost of transfer is given by $\frac c 2 f_{n,t}^2$. Transfers are assumed to be fast enough, so that at equilibrium, the marginal cost of transfer is equal to the difference of prices in sites $n$ and $n+1$, \ca i.e. for all time $t>0$\ca
\begin{equation}
cf_{n,t} = p_{n+1,t}-p_{n,t}.
\end{equation}

Arbitrageurs own the stored goods. The global cost of storage is given by a function $g_n(k_{n,t})$. Arbitrageurs are assumed to be risk neutral and to discount their future revenues at the rate $r \geq 0$. Hence, at equilibrium, the following holds \ca for all $t,dt > 0$\ca
\begin{equation}\label{noarbitrage}
p_{n,t} = \mathbb{E}[p_{n,t+dt}]e^{-rt} - g'_n(k_{n,t}),
\end{equation}
where the expectations is taken with respect to the collection of independent Brownian motions. Indeed, \ca remark that the left hand side of \eqref{noarbitrage} is simply the price at the site $n$ at the time $t$ of a unit of good, while the right hand side is the expected price at the same site $n$, at the time $t+dt$ minus the cost of storing a unit of good. Indeed, since $g_n$ represents the total cost of storage at the site $n$, storing a single unit of good when the storage level is $k_{n,t}$ is given by $g'_n(k_{n,t})$. Hence the no arbitrage relation indeed yields \eqref{noarbitrage}.\ca Note that the previous relation indeed holds because there is no constraint on the storage. If there were state constraints on the storage, then the situation would be closer to the one studied in \citep{achdou2022class}, and only an inequality would hold when $k_{n,t}$ reaches a constraint.\\

\ca Thanks to the assumption that $p_{n,t}$ is only a function of the state variable $K$, we can rewrite \eqref{noarbitrage} into
\be\label{eq133}
p_n(K_t) = \mathbb{E}[p_n(K_t)]e^{-rt} - g'_n(K_t),
\ee
where we use the slight abuse of notation that $g'_n(K_t) = g'_n(k_{n,t})$. Now let us remark that the dynamics of $(K_t)_{t \geq 0}$ are given by
\be
dk_{n,t} = (S_n(p_{n,t}) - D_n(p_{n,t}))dt + \sigma d W_{n,t} + (cf_{n,t} - c f_{n-1,t})dt.
\ee
Hence, we can deduce from \eqref{eq133} the relation
\begin{equation}\label{mfgp}
\begin{aligned}
0 = -&r p_n(K) + \frac{\sigma^2}{2}\sum_{i=1}^N\partial_{k_ik_i}p_n(K) - g'_n(k_n) +\\
+ & \sum_{i =1}^N \partial_{k_i}p_n(K)\left[S_n(p_{n}) - D_n(p_{n}) + \frac{p_{n+1}+ p_{n-1} - 2p_n}{c}\right],
\end{aligned}
\end{equation}
which has to be valid for any $K \in \R^N$. \ca\\

Let us remark that equation \eqref{mfgp} has the typical form of a \ca stationary \ca MFG master equation \ca in finite state space\ca  \citep{bertucci2019some,bertucci2021monotone}. However, contrary to the usual MFG master equations, the variable $K$ does not describe here the repartition of players in the state space but rather the repartition of objects in the state space (here the level of storage in each site). \ca It is a general feature of MFG master equation as they naturally appear as the equilibrium equation of several Economics model, even if no proper game takes between the agents. For instance, this is also the case in \citep{bertucci2020mean,achdou2022class}.
\ca

Finally, since there is no friction in our market equilibrium model, not surprisingly, invisible hand principle applies and the MFG master equation \eqref{mfgp} can be derived from a HJB equation which is the HJB equation of a benevolent planner of the invisible hand. We come back on this later on.

\subsection{The limit equations}
\ca In this section, we keep up with the assumptions of the previous one, namely on the smoothness of the functions $(p_n)_{1\leq n \leq N}$. Moreover, as we are going to present the limit of the equation \eqref{mfgp} as $N \to \infty$, we also assume here that all the convergence we are going to show hold. \ca

We now compute the limit of \eqref{mfgp} as $N \to \infty$. \ca Recall that we can interpret the $N$ sites as being positioned uniformly on the unit circle. Taking the limit $N \to \infty$, we expect that there will be a continuum of sites located at each $x \in [0,1]$ (in a periodic setting).

In the previous equations, there are two variables. The first one is the location of the site in question $n$. As we just mentioned, in the limit $N \to \infty$, this variable is replaced by a continuous one $x \in [0,1]$. The second variable is the level of storage in each of the location $K\in \mathbb{R}^N$. $K$ can be seen as a function from the set of sites to $\R$. We expect that taking $N \to \infty$, this variable $K$ is a replaced by a function $k : [0,1] \to \mathbb{R}$. 

Hence, $p_n(K)$ becomes in the limit $p(x,k)$. Furthermore, $p$ can be seen as an operator associating to $k \in L^2([0,1])$ the function $x \to p(x,k)$. We assume here that $p$ is well defined as an object $p : L^2([0,1]) \to L^2([0,1])$.\ca\\

To simplify the following, we assume \ca that $S_n(p) - D_n(p) = p$ for all $n$. The proper scaling for which a passage to the limit $N \to \infty$ is meaningful is $c = N^{-2}$\ca. Since we assume that the sites are located uniformly on a unit circle, this corresponds to a quadratic cost of transportation. In this regime, the limit of \eqref{mfgp} becomes
\begin{equation}\label{eqpl}
0 = - r p(k) + \frac{\sigma^2}{2}\Delta_k p(k) + \langle p(k) -  \Delta_x p(k), \nabla\rangle p(k) + g'(k), \text{ in } L^2([0,1]).
\end{equation}
In the previous equation, the operator $\Delta_k$ is the Laplacian operator in the space of functions taking values in $L^2([0,1])$, i.e., it is the trace of the Hessian; $\langle\cdot,\cdot\rangle$ is the scalar product in $L^2([0,1])$, $\Delta_x$ is the usual Laplacian operator from $H^2([0,1])$ into $L^2([0,1])$ and $\nabla$ is the usual gradient operator on the Hilbert space $L^2([0,1])$. \ca Of course, it is by no mean clear that classical solutions of \eqref{eqpl} exist. Furthermore, as \eqref{mfgp} was a finite state space MFG master equation, \eqref{eqpl} is a MFG master equation set on a Hilbert space. At the moment, there exists no technique to study this master equation directly. We now explain when this master equation can be obtained from a control problem, or in other words when it derives from an HJB equation or also when we are in the potential case. \\

\subsection{The HJB equation in the potential case}
Using the usual terminology of game theory, we say that the game derives from a potential when there exists a differentiable function $G : L^2([0,1]) \to \R$ such that $\nabla G (k)= g(k)$. Assume that such a $G$ exists and assume that there exists a unique solution $\phi : L^2([0,1])\to \mathbb{R}$ of the equation
\begin{equation}\label{eqphi1}
r \phi -  \frac{\sigma^2}{2}\Delta_k \phi - \frac{1}{2} \langle (Id - \Delta_x)\nabla \phi,\nabla \phi\rangle = G(k) \text{ in } L^2([0,1]),
\end{equation}
Then, $\nabla \phi$ is a solution of \eqref{eqpl}. Hence, solving \eqref{eqpl} can be done by means of the HJB equation \eqref{eqphi1}.\\

The study of \eqref{eqphi1} is easier than the one of \eqref{eqpl}. The interest of the latter, is that it offers a wider range of applications. Indeed, some models can be expressed by equation of the form of \eqref{eqpl} without the possibility of integrating the equation into an HJB equation such as \eqref{eqphi1}. This is for instance the case for general storage costs $g$. However, for the sake of simplicity, we limit ourselves to the study of a time dependent counterpart of \eqref{eqphi1} in this paper.\\

The rest of the paper is devoted to the mathematical study of HJB equation of the form of \eqref{eqphi1}.

\ca

\section{Notation and mathematical formulation of the problem}
Let $(H, \langle \cdot, \cdot \rangle)$ be a separable real Hilbert space and $A \in \mathcal{L}(H)$ a symmetric, positive and invertible operator such that $A^{-1}$ is compact. We denote by $(\lambda_i)_{i \geq 0}$ the increasing sequence of eigenvalues of $A$ (possibly with repetition according to the multiplicity) and by $(e_i)_{i \geq 0}$ an orthonormal basis of $H$ formed of corresponding eigenvectors. \ca We assume at all time that $A$ is such that 
\be\label{hypA}
\sum_{ i \geq 0}\frac{\log(1+ \lambda_i)}{\lambda_i} < \infty.
\ee
\ca We shall adopt the convention 
\begin{equation}
\forall x \in H, x = \sum_{i=0}^{\infty} x_i e_i, \text{ where } (x_i)_{i \geq 0} \in \mathbb{R}^{\mathbb{N}}.
\end{equation}
\ca Moreover we introduce the notation $H_N = Span(\{e_0,...,e_N\})$.\ca

We consider the following PDE whose solution is denoted by $\phi : (0,\infty)\times H \to \mathbb{R}$ 
\begin{equation}\label{hjb}
\partial_t \phi - \Delta \phi + \frac{1}{2}\langle A \nabla \phi, \nabla \phi \rangle = 0, \text{ in } (0,\infty) \times H,
\end{equation}
\begin{equation}\label{hjbt0}
\phi|_{t = 0} = \phi_0 \text{ in } H.
\end{equation}
In the previous equations, $\phi_0$ is considered as a data from the model and assumptions on it shall be made later on, $\nabla$ is the gradient operator and \ca $\Delta$ is the Laplacian, which is defined by the trace of the Hessian operator. Let us note that the Laplacian is not always well defined, even on smooth functions, as there is no reason that the Hessian is in the trace class. \ca The aim of this paper is to provide a suitable notion of solution for the singular PDE \eqref{hjb}-\eqref{hjbt0}.
\begin{Rem}
The case of equation \eqref{hjb} with a right hand side term could have also been treated. Although, because it is merely an extension of the study we present here, we leave this trivial extension to the interested reader.
\end{Rem}

\subsection{The quadratic case}
Let us consider first the instructive case in which $\phi_0$ is given by
\begin{equation}
\phi_0(x) = \frac{1}{2}\sum_{i =0 }^{\infty} \mu^0_i x_i^2,
\end{equation}
for some bounded real sequence $(µ^0_i)_{i \geq 0} \in \ell^{\infty}$. We then seek a solution of \eqref{hjb}-\eqref{hjbt0} of the form
\begin{equation}\label{phiclosedform}
\phi(t,x) =  c(t) + \frac{1}{2}\sum_{i = 1}^{\infty} µ_i(t)x_i^2, \text{ for } t \geq 0, x \in H.
\end{equation}
When plugging this function in \eqref{hjb}-\eqref{hjbt0}, we obtain that $\phi$ is indeed a solution if and only if for all $i \geq 0$
\begin{equation}\label{edomu}
\begin{cases}
\frac{1}{2} \dot{µ}_i(t) + \frac{1}{2}\lambda_i µ_i(t)^2 = 0,\\
µ_i(0) = µ_i^0,
\end{cases}
\end{equation}
and 
\begin{equation}
\dot{c}(t) = \sum_{i =0}^{\infty} µ_i(t).
\end{equation}
From this we deduce that if for some $i \geq 0$, $µ_i^0 < 0$, then there is no solution to \eqref{hjb}-\eqref{hjbt0} as there is explosion in finite time. On the other hand, if $\mu_i^0 \geq 0$ for all $i \geq 0$, then the system \eqref{edomu} leads to 

\begin{equation}\label{eq:241}
\forall i \geq 0, t\geq 0, µ_i(t) = \frac{µ_i^0}{1 + \lambda_i µ_i^0 t}.
\end{equation}
This naturally yields a formula for $c$ which is
\begin{equation}\label{quadrac}
\forall t \geq 0, c(t) = \sum_{i =0}^{\infty} \lambda_i^{-1} \log(1 + \lambda_i µ_i^0 t).
\end{equation}
The previous is well defined only if $\sum_i \lambda_i^{-1}\log(1 + \lambda_i)< + \infty$. The previous computations can be summarized in
\begin{Prop}
For any positive bounded sequence $µ^0$, the function $\phi$ defined by \eqref{phiclosedform},\eqref{eq:241} and \eqref{quadrac} is a classical solution of \eqref{hjb}, which satisfies the appropriate initial condition.
\end{Prop}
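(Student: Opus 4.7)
The strategy is a direct verification: I would check that the series defining $\phi$ and $c$ converge pointwise on $[0,\infty)\times H$, compute the required derivatives termwise with dominated-convergence justifications, and finally substitute into \eqref{hjb} to reduce it to the scalar ODEs that already define $\mu_i$ and $c$. Two uniform bounds will do the heavy lifting: from \eqref{eq:241} and $\mu^0 \geq 0$, one has $0 \leq \mu_i(t) \leq \mu_i^0 \leq \|\mu^0\|_\infty$ for all $t \geq 0$, and the sharper estimate $\mu_i(t) \leq 1/(\lambda_i t)$ holds for $t>0$. A consequence of \eqref{hypA} that I would use repeatedly is $\sum_i \lambda_i^{-1} < \infty$; this holds because $\log(1+\lambda_i)\geq 1$ for all but finitely many $i$.

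Using the first bound, the quadratic series $\frac{1}{2}\sum_i \mu_i(t) x_i^2$ is dominated by $\frac{1}{2}\|\mu^0\|_\infty \|x\|_H^2$ and so converges for every $(t,x) \in [0,\infty)\times H$. Each term of $c(t)$ is majorised by $\lambda_i^{-1}\log(1+\lambda_i \|\mu^0\|_\infty t)$, which is summable by \eqref{hypA}, so $c$ is well-defined and continuous on $[0,\infty)$ with $c(0)=0$. Termwise differentiation in $x$ then gives $(\nabla \phi(t,x))_i = \mu_i(t) x_i$, so $\nabla \phi(t,x) \in H$, and a diagonal Hessian with entries $\mu_i(t)$. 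For $t>0$ this Hessian is trace-class, and $\Delta \phi(t,x) = \sum_i \mu_i(t) \leq t^{-1}\sum_i \lambda_i^{-1} < \infty$. For the time derivative, the ODE gives $|\dot \mu_i(t)| = \lambda_i \mu_i(t)^2 \leq \mu_i(t)/t \leq \|\mu^0\|_\infty/t$, which is summable for $t$ in any compact subinterval of $(0,\infty)$, justifying termwise differentiation in $t$ of both the quadratic sum and $c$.

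Substituting into \eqref{hjb}, the singular term $\langle A \nabla \phi, \nabla \phi \rangle = \sum_i \lambda_i \mu_i(t)^2 x_i^2$ is controlled since $\lambda_i \mu_i(t)^2 \leq \|\mu^0\|_\infty/t$. The equation then decouples into an $x$-dependent part $\sum_i \bigl(\dot \mu_i + \lambda_i \mu_i^2\bigr)x_i^2 = 0$, which holds by \eqref{edomu}, and an $x$-independent part $\dot c(t) = \sum_i \mu_i(t)$, which is exactly the termwise derivative of \eqref{quadrac}. The initial condition $\phi(0,\cdot) = \phi_0$ then follows from $\mu_i(0)=\mu_i^0$ and $c(0)=0$, together with the dominated-convergence continuity of the series at $t=0$. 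The only delicate point of the argument — and the sole reason for imposing \eqref{hypA} — is the convergence of $c(t)$ and, equivalently, the finiteness of $\Delta \phi$ at positive times; every other step is a routine majoration.
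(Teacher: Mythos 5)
Your proposal is correct and follows essentially the same route as the paper, which simply summarizes the preceding formal computation: plug the separable ansatz into \eqref{hjb}, reduce to the ODEs \eqref{edomu} and $\dot c = \sum_i \mu_i$, and use \eqref{hypA} (and the consequence $\sum_i \lambda_i^{-1} < \infty$) to justify the series for $c$ and the trace of the Hessian; your write-up merely makes the termwise differentiation and domination arguments explicit. One small caveat: the convergence of $c(t)$ and the finiteness of $\Delta\phi(t,\cdot)$ for $t>0$ are not ``equivalent'' as you claim --- the latter needs only $\sum_i\lambda_i^{-1}<\infty$ while the former genuinely uses \eqref{hypA}, exactly as the remark following the Proposition in the paper points out --- but this does not affect the validity of your proof.
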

The question of uniqueness of such a solution is treated later on. Let us insist upon the fact that, in this simple setting, the sequence $(\lambda_i)_{i \geq 0}$ has to satisfy 
\begin{equation}
\sum_i \lambda_i^{-1}\log(1 + \lambda_i)< + \infty,
\end{equation}
and $\phi_0$ has to be assumed to be convex. \ca Indeed, in order to get that the Laplacian of $\phi$ is well defined for $t > 0$, we only need that $\phi_0$ is convex and the assumption $\sum_i \lambda_i^{-1} < \infty$. However, in ordre to define properly $\phi$ with the function $c(\cdot)$, we need in addition that $\phi_0 \in \mathcal{C}^{1,1}$ and \eqref{hypA}\ca.

\subsection{The deterministic case}
Let us consider the deterministic problem
\begin{equation}\label{deterministic}
\begin{aligned}
\partial_t \psi + \frac{1}{2}\langle A \nabla \psi, \nabla \psi\rangle &= 0, \text{ in } (0,\infty)\times H,\\
\psi|_{t = 0} &= \phi_0 \text{ in H}.
\end{aligned}
\end{equation}
Formally the solution of this equation is given by the Lax-Oleinik formula
\begin{equation}\label{laxoleinik}
\psi(t,x) = \inf_{y \in H} \left\{ \phi_0(y) + \frac{\langle A^{-1}(x-y), x-y \rangle}{2t}\right\}.
\end{equation}
Of course it is not clear that this formula \ca defines a smooth functions $\psi$ \ca without assumptions on $A$ and $\phi_0$. Nonetheless, we believe that the deterministic case is insightful for the study of \eqref{hjb}. For instance, it helps us to understand the behavior of the solution of \eqref{hjb} near $t = 0$. Indeed the function $\psi$ defined by \eqref{laxoleinik} is not continuous in $t$ at $t=0$ without additional assumptions on the initial condition. The following result gives a necessary condition for which this is the case.
\begin{Prop}\label{prop:psi0}
Assume that $\phi_0$ is convex, continuous and that $\phi_0(x) \to \infty$ when $|x| \to \infty$. Then for any $x \in H$, $\psi(t,x) \to \phi_0(x)$ as $t \to 0$ where $\psi$ is defined by \eqref{laxoleinik}.
\end{Prop}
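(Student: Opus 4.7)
The plan is to establish the two-sided bound $\limsup_{t\to 0}\psi(t,x) \le \phi_0(x) \le \liminf_{t\to 0}\psi(t,x)$. The upper bound is immediate: choosing $y = x$ in the infimum defining $\psi(t,x)$ yields $\psi(t,x) \le \phi_0(x)$ for every $t > 0$. The substance of the proof is the reverse inequality. Since $\phi_0$ is convex and continuous on $H$ and $\phi_0(x) \to \infty$ as $|x| \to \infty$, $\phi_0$ is bounded below on $H$ (its sublevel sets are bounded, hence the infimum over any sublevel set is reached on a bounded set where $\phi_0$ is bounded by local boundedness of finite convex continuous functions). Call $m = \inf_H \phi_0 > -\infty$.

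Fix $x \in H$ and a sequence $t_n \to 0^+$. For each $n$, pick a near-minimizer $y_n \in H$ with
\begin{equation*}
\phi_0(y_n) + \frac{\langle A^{-1}(x - y_n), x - y_n\rangle}{2 t_n} \le \psi(t_n, x) + t_n \le \phi_0(x) + t_n.
\end{equation*}
Reading off the two nonnegative terms separately gives $\phi_0(y_n) \le \phi_0(x) + t_n$ and $\langle A^{-1}(x-y_n), x-y_n\rangle \le 2 t_n (\phi_0(x) + t_n - m)$. The first bound combined with the coercivity of $\phi_0$ forces $(y_n)$ to be bounded in $H$, and the second gives $\langle A^{-1}(x-y_n), x-y_n\rangle \to 0$. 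Extracting a subsequence (not relabeled), I may assume $y_n \rightharpoonup y^\star$ weakly in $H$.

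To identify $y^\star = x$, I use that $A^{-1}$ is compact and positive. Writing $B = A^{-1/2}$, one has $\langle A^{-1}z, z\rangle = \|Bz\|^2$ with $B$ compact and injective. Since $B$ is compact, $x - y_n \rightharpoonup x - y^\star$ implies $B(x-y_n) \to B(x - y^\star)$ strongly; combined with $\|B(x-y_n)\|^2 \to 0$, this gives $B(x-y^\star) = 0$, hence $y^\star = x$ by injectivity of $B$. Finally, $\phi_0$ is convex and continuous, hence weakly lower semicontinuous, so
\begin{equation*}
\phi_0(x) = \phi_0(y^\star) \le \liminf_{n\to\infty} \phi_0(y_n) \le \liminf_{n\to\infty} \bigl(\psi(t_n,x) + t_n\bigr) = \liminf_{n\to\infty} \psi(t_n, x).
\end{equation*}
Since the sequence $t_n \to 0^+$ was arbitrary, this yields the desired lower bound and completes the proof.

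The only delicate step is the identification $y^\star = x$: in infinite dimension the quantity $\langle A^{-1}z,z\rangle$ induces a strictly weaker topology than the norm (because $\lambda_i \to \infty$, hence $\|A^{-1}\|$ is attained but the form is not uniformly coercive), so the convergence $\langle A^{-1}(x-y_n),x-y_n\rangle \to 0$ alone does not force $y_n \to x$ in norm. Compactness of $A^{-1}$ is exactly what bridges weak compactness of the minimizing sequence (from coercivity of $\phi_0$) with the strong convergence needed to pass to the limit in the quadratic penalty.
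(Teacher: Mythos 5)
Your proof is correct and follows essentially the same route as the paper: bound $\psi(t,x)\le\phi_0(x)$ by taking $y=x$, pick near-minimizers, use coercivity to extract a weak limit, show the quadratic penalty forces the limit to be $x$, and conclude by weak lower semicontinuity of the convex continuous $\phi_0$. The only difference is that you spell out the identification $y^\star=x$ via compactness of $A^{-1/2}$ (one could equally use weak lower semicontinuity of $z\mapsto\langle A^{-1}z,z\rangle$ plus injectivity), a step the paper leaves implicit.
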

\begin{proof}
Let us observe first that $\psi(t,x) \leq \phi_0(x)$ for all $t,x$ (simply choose $y = x$ in \eqref{laxoleinik}). Fix $x \in H$ and consider a sequence $(t_n)_{n \geq 0}$ of non negative elements which converges toward $0$. Consider for all $n \geq 0$ an element $y_n \in H$ such that
\begin{equation}\label{minilax}
\phi_0(y_n) + \frac{\langle A^{-1}(x-y_n), x-y_n \rangle}{2t_n} \leq \psi(t_n,x) + \frac{1}{n+1} \leq \phi_0(x) + \frac{1}{n+1}.
\end{equation}
From the growth assumption on $\phi_0$, we deduce that, up to a subsequence, $(y_n)_{n \geq  0}$ has a weak limit $y^* \in H$. From \eqref{minilax}, since $\phi_0$ is clearly bounded from below, we deduce that \ca $(\langle A^{-1}(x-y_n),x-y_n\rangle)_{n \geq 0}$ converges toward $0$. Hence we obtain that $y^* = x$. \ca Since $\phi_0$ is lower semi-continuous for the weak topology, we deduce finally the desired result.
\end{proof}
\ca \begin{Rem}
It is in fact only the coercivity and the lower semi-continuity of $\phi_0$ with respect to the weak topology which are needed.
\end{Rem} \ca
We also gather the following properties on the function $\psi$ defined by \eqref{laxoleinik}.
\begin{Prop}
Assume that $\phi_0$ is convex, continuous and that $\phi_0(x) \to \infty$ when $|x| \to \infty$. Then $\psi$ is globally continuous, and there exists a unique $y^*\in H$ such that
\be
\psi(t,x) = \phi_0(y^*) + \frac{\langle A(x-y^*),x-y^*\rangle}{2t}.
\ee
Moreover, for all $0 < t \leq s, x \in H$
\be
\psi(t,x) \geq \psi(s,x),
\ee 
\be\label{boundeter}
\psi(t,x) \leq \min\left\{\frac{\langle Ax,x\rangle}{2t} + \phi(0),\phi_0(x)\right\}.
\ee
\end{Prop}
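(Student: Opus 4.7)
The proposition bundles four assertions about $\psi$: existence and uniqueness of an optimal $y^\ast$, monotonicity in $t$, the upper bound, and global continuity. Set $J_{t,x}(y) := \phi_0(y) + \langle A^{-1}(x-y), x-y\rangle/(2t)$. Monotonicity is immediate: for $0 < t \leq s$ and any $y \in H$, $J_{s,x}(y) \leq J_{t,x}(y)$ since the quadratic term is nonnegative and $1/s \leq 1/t$; taking the infimum over $y$ yields $\psi(s,x) \leq \psi(t,x)$. The two upper bounds come from plugging $y = x$ and $y = 0$ into \eqref{laxoleinik}.

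For existence and uniqueness of $y^\ast$ at fixed $(t,x) \in (0,\infty)\times H$ I apply the direct method. The functional $J_{t,x}$ is convex and continuous on $H$, and strictly convex because the quadratic form $y \mapsto \langle A^{-1}(x-y), x-y\rangle$ is strictly convex, $A^{-1}$ being positive and invertible. A minimizing sequence $(y_n)$ satisfies $\phi_0(y_n) \leq J_{t,x}(y_n) \to \psi(t,x) \leq \phi_0(x)$, so by coercivity of $\phi_0$ it is bounded in $H$. Extracting a weakly convergent subsequence $y_n \rightharpoonup y^\ast$ and using that a convex continuous function on a Hilbert space is weakly sequentially lower semi-continuous, I obtain $J_{t,x}(y^\ast) \leq \liminf J_{t,x}(y_n) = \psi(t,x)$, so $y^\ast$ is a minimizer; strict convexity makes it unique.

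Global continuity is established by separately proving upper and lower semi-continuity, after extending $\psi$ by $\psi(0,x) := \phi_0(x)$ in accordance with Proposition \ref{prop:psi0}. Upper semi-continuity at $(t_0, x_0)$ is easy: for $t_0 > 0$ one plugs $y = y^\ast(t_0, x_0)$ into $J_{t_n, x_n}$ and uses continuity in $(t,x)$, while for $t_0 = 0$ one uses $\psi(t_n, x_n) \leq \phi_0(x_n) \to \phi_0(x_0)$. For lower semi-continuity, let $y_n := y^\ast(t_n, x_n)$; the estimates $\phi_0(y_n) \leq \psi(t_n, x_n) \leq \phi_0(x_n)$, together with coercivity of $\phi_0$, imply that $(y_n)$ is bounded, and along a subsequence $y_n \rightharpoonup y_\infty$. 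For $t_0 > 0$ the conclusion is routine, since both $\phi_0$ and the quadratic form are weakly lower semi-continuous and $t_n \to t_0 > 0$, so $\liminf J_{t_n, x_n}(y_n) \geq J_{t_0, x_0}(y_\infty) \geq \psi(t_0, x_0)$.

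The only delicate case, which I expect to be the main obstacle, is lower semi-continuity at $t_0 = 0$: one must identify $y_\infty = x_0$ in order to exploit $\liminf \phi_0(y_n) \geq \phi_0(y_\infty) = \phi_0(x_0)$ by weak lower semi-continuity. To this end I would combine $\psi(t_n, x_n) = \phi_0(y_n) + \langle A^{-1}(x_n - y_n), x_n - y_n\rangle/(2t_n)$ with $\psi(t_n, x_n) \leq \phi_0(x_n)$ and the fact that $\phi_0 \geq \inf_H \phi_0 > -\infty$ (convex, continuous, coercive) to get $\langle A^{-1}(x_n - y_n), x_n - y_n\rangle \leq 2t_n\bigl(\phi_0(x_n) - \inf_H \phi_0\bigr) \to 0$. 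This forces $A^{-1/2}(x_n - y_n) \to 0$ strongly; since $A^{-1/2}(x_n - y_n) \rightharpoonup A^{-1/2}(x_0 - y_\infty)$ weakly and $A^{-1/2}$ is injective, $y_\infty = x_0$, and $\liminf \psi(t_n, x_n) \geq \liminf \phi_0(y_n) \geq \phi_0(x_0) = \psi(0, x_0)$ follows, completing the proof.
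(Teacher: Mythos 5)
Your proof is correct and follows essentially the same route as the paper: the two inequalities by testing the infimum with $y=x$ and $y=0$ (and element-wise comparison in $t$), existence and uniqueness of $y^*$ by the direct method using continuity, coercivity (inherited from $\phi_0$) and strict convexity, and continuity by the same weak-compactness/weak lower semi-continuity argument as in Proposition \ref{prop:psi0}, identifying the weak limit of the minimizers through the vanishing of the quadratic term. Your write-up is simply more detailed than the paper's, which leaves the continuity step as a reference to the previous proposition.
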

\begin{proof}
The two inequalities are almost trivial, the first one is simply a comparison element by element in the infimum, while the second one comes from taking $y = 0$. 

The existence of a unique minimizers holds because, for all $t >0, x \in H$, $y \to \phi_0(y) + (2t)^{-1}\langle A(x-y),x-y\rangle$ is continuous, coercive and strictly convex. The continuity can be proven in exactly the same fashion as in the previous Proposition so we do not detail it here. 
\end{proof}

\begin{Rem}\label{rem:uniformpsi}
Since $\psi$ is continuous in $x$ for all $t > 0$ and for all $0 < t  \leq s, x \in H$ $\psi(t,x) > \psi(s,x)$, we deduce from Dini's Theorem that, in fact, the convergence in Proposition \ref{prop:psi0} is uniform on all compact sets.
\end{Rem}
\ca

Another situation in which the deterministic case is helpful is the one in which we have estimates on the Laplacian of a solution $\phi$ of \eqref{hjb}. In this case, we can use the solution given by \eqref{laxoleinik} (if it is well defined) to obtain some continuity estimates near $t=0$ on $\phi$ as the next result explains.
\begin{Prop}\label{prop:est0}
Let us consider a smooth solution $\phi$ of \eqref{hjb} such that $\phi_{|t = 0}$ is continuous, convex and goes to $\infty$ as $|x| \to \infty$ and such that $\Delta \phi$ is uniformly bounded, i.e. $\|\Delta \phi\|_{\infty} \leq c$. Moreover consider the function $\psi$ defined by \eqref{laxoleinik} with $\phi_0 = \phi|_{t = 0}$ and assume that it is a viscosity solution of \eqref{deterministic}. Assume moreover that 
\begin{equation}\label{hyp:o}
|\phi(t,x) - \psi(t,x)| \underset{|x|\to \infty}{=} o(\psi(t,x)).
\end{equation}
Then the following holds for $t \geq 0, x \in H$
\begin{equation}
\psi(t,x) - ct \leq \phi(t,x) \leq \psi(t,x) + ct.
\end{equation}
\end{Prop}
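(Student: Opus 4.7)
The strategy is to recognize $\phi \pm ct$ as classical (hence viscosity) sub- and supersolutions of the deterministic equation \eqref{deterministic} and to compare them with $\psi$. Set $\phi^\pm(t,x) := \phi(t,x) \pm ct$. Since $\phi$ solves \eqref{hjb} classically, an immediate substitution gives
\[
\partial_t \phi^\pm + \frac{1}{2}\langle A\nabla \phi^\pm, \nabla \phi^\pm\rangle \;=\; \Delta \phi \pm c,
\]
which, under the assumption $\|\Delta\phi\|_\infty \leq c$, is nonnegative for $\phi^+$ and nonpositive for $\phi^-$. Hence $\phi^+$ is a classical supersolution and $\phi^-$ a classical subsolution of \eqref{deterministic}, and both coincide with $\psi$ at $t=0$ since $\phi_{|t=0} = \phi_0$.

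The desired bounds $\psi - ct \leq \phi \leq \psi + ct$ are thus equivalent to $\phi^- \leq \psi \leq \phi^+$. Since $\psi$ is assumed to be a viscosity solution of \eqref{deterministic}, each inequality reduces to the standard "viscosity subsolution is dominated by viscosity supersolution with equal initial data" comparison statement for the first-order Hamilton--Jacobi equation \eqref{deterministic} on $[0,T]\times H$.

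The genuine obstacle is precisely this comparison principle, because $H$ is infinite-dimensional and the Hamiltonian involves the unbounded operator $A$. I would follow the Crandall--Lions doubling of variables scheme and introduce for $(t,x,s,y)$ a penalisation of the form
\[
\frac{\alpha}{2}\langle A^{-1}(x-y),x-y\rangle + \frac{(t-s)^2}{2\beta} + \varepsilon\bigl(\langle A^{-1}x,x\rangle + \langle A^{-1}y,y\rangle\bigr),
\]
whose first piece has a gradient perfectly compatible with the Hamiltonian $\tfrac12\langle A\nabla\cdot,\nabla\cdot\rangle$, and whose $\varepsilon$-piece has relatively compact sublevel sets in $H$ because $A^{-1}$ is compact. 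The hypothesis \eqref{hyp:o}, combined with the coercivity of $\psi$ inherited from that of $\phi_0$ via Proposition \ref{prop:psi0} and the bound \eqref{boundeter}, guarantees that $\phi^- - \psi$ (respectively $\psi - \phi^+$) is negligible against $\langle A^{-1}x,x\rangle$ at infinity, so that the penalised supremum is attained in a bounded region. The standard viscosity inequalities at the maximiser then contradict the sub/supersolution property, and sending $\alpha \to \infty$, then $\beta, \varepsilon \to 0$ in the right order yields $\phi^- \leq \psi$ and $\psi \leq \phi^+$.

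The main difficulty is this compactification/penalisation analysis: one must carefully match the $A$-weighted penalty with the $A$-weighted Hamiltonian so that the quadratic contributions cancel in the limit, and genuinely exploit \eqref{hyp:o} to localise the maximisers. The verification in Step 1 is routine by contrast, and the whole argument relies on the fact that the hypotheses of the statement have been tailored to reconcile the finite-codimensional viscosity machinery with the infinite-dimensional nature of \eqref{hjb}.
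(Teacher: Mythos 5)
Your first step---observing that $\phi\pm ct$ are classical super/sub-solutions of \eqref{deterministic} because $\phi$ solves \eqref{hjb} with $\|\Delta\phi\|_\infty\le c$---is correct and is exactly the structural fact the paper exploits. The gap is in the comparison argument, which you reduce everything to but only sketch, and the sketch fails at two concrete points. First, the assertion that the penalty $\varepsilon\langle A^{-1}x,x\rangle$ has ``relatively compact sublevel sets because $A^{-1}$ is compact'' is false: compactness of $A^{-1}$ means its eigenvalues $\lambda_i^{-1}$ tend to $0$, so the set $\{x:\langle A^{-1}x,x\rangle\le 1\}$ contains the vectors $\sqrt{\lambda_i}\,e_i$ of arbitrarily large norm; the penalty is neither coercive nor compactifying, so the penalised supremum need not be attained and you have no point at which to write the viscosity inequalities. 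In infinite dimension this attainment issue must be handled by a perturbed optimisation principle---the paper invokes Stegall's lemma, perturbing by a small linear form $\langle\xi,\cdot\rangle$ with $|A\xi|\le\epsilon$ to produce a strict extremum on a ball---and nothing of that kind appears in your plan. Second, your localisation via \eqref{hyp:o} does not work as stated: $|\phi-\psi|=o(\psi)$ does not make $\phi^- -\psi$ bounded, only negligible relative to $\psi$, and a non-coercive penalty cannot dominate a term that may still grow at infinity, so ``the penalised supremum is attained in a bounded region'' is unjustified.

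The paper resolves precisely these two difficulties by a different mechanism, which you would need to import (or replace by something equivalent). It never runs a doubling-of-variables comparison for the singular equation; instead it compares $\lambda\phi$ with $\psi+ct+\delta t$ for $\lambda\in(0,1)$, so that $w=\psi+ct-\lambda\phi+\delta t\ge (1-\lambda)\psi-\lambda\,o(\psi)+\dots$ is coercive (this is where \eqref{hyp:o} and the coercivity of $\psi$ really enter), looks at the first time $t_0$ at which $w$ becomes negative, localises the infimum in a ball, applies Stegall's lemma there, and tests the viscosity subsolution property of $\psi$ against the smooth function built from $\lambda\phi$ plus the linear perturbation; the positivity of $A$ makes the extra term $(\lambda^2-\lambda)\tfrac12\langle A\nabla\phi,\nabla\phi\rangle$ harmless, the bound $\|\Delta\phi\|_\infty\le c$ gives the contradiction, and one lets $\epsilon\to0$, then $\lambda\to1$. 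Without the $\lambda$-trick (for coercivity of the comparison function) and without a variational principle (for attainment), your scheme does not close, even though your identification of the sub/supersolutions and of the $A^{-1}$-weighted quadratic as the Hamiltonian-compatible penalisation is sensible.
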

Let us comment on \eqref{hyp:o}. This assumption controls the distance between $\psi$ and $\phi$. For instance, if the difference of these two functions is bounded, then \eqref{hyp:o} holds. Indeed recall that $\psi$ is a coercive function of $x$ for any $t$. Moreover, let us insist on the fact that the bounds obtained in the previous Proposition do not depend quantitatively on this assumption, but only on the fact that it holds. This shall be helpful later on in the paper.
\begin{Rem}
Let us remark that if $\psi$ is convex, then one can replace the conclusion of the Proposition by, for all $t\geq 0, x \in H$ :
\begin{equation}
\psi(t,x) \leq \phi(t,x) \leq \psi(t,x) + ct.
\end{equation}
\end{Rem}
\begin{Rem}\label{Rem:l1}
If instead of the uniform bound on the Laplacian, one has indeed $\|\Delta \phi(t,\cdot)\|_{\infty} \leq c(t)$ for some integrable function $c(\cdot)$. Then one can replace the conclusion of the Proposition with
\begin{equation}
\psi(t,x) -\omega(t)\leq \phi(t,x) \leq \psi(t,x) + \omega(t),
\end{equation}
where $\omega(t) = \int_0^tc(s)ds$.
\end{Rem}
\begin{proof}\ca\textit{(Of Proposition \ref{prop:est0}.)}\ca
We only prove the second inequality, the first one can be obtained in a similar fashion. The following is by now somehow standard in the study of Hamilton-Jacobi equations in infinite dimension.\\
Without loss of generality we can assume that $\phi |_{t = 0} \geq 0$. Indeed, the equation is invariant by the addition of a constant and $\phi |_{t = 0}$ is bounded from below. We want to prove that for any $\lambda \in (0,1)$, $t \geq 0, x \in H$,
\begin{equation}\label{eq:34}
\lambda \phi(t,x) \leq \psi(t,x) + ct.
\end{equation}
Let us assume that \eqref{eq:34} does not hold. Hence there exists $(t_*,x_*) \in (0,\infty) \times H$ such that $\lambda \phi(t_*,x_*) \ca > \ca \psi(t_*,x_*) + ct_*$. Thus there exists $\delta > 0$, such that $w : (t,x) \to \psi(t,x) + ct - \lambda \phi(t,x) + \delta t$ satisfies $w|_{t = 0} \geq 0$ and $w(t_*,x_*)< 0$. From this, we deduce that $t_0$ defined by
\begin{equation}
t_0 = \inf\{t > 0, \exists x \in H, w(t,x) < 0\}
\end{equation}
is smaller than $t_*$. From the coercivity of $\psi(t_0)$ and \eqref{hyp:o}, we deduce that $w(t_0)$ is also coercive. Hence, there exists $R > 0$ such that 
\begin{equation}
\inf_{|x| \leq R} w(t_0,x) = 0.
\end{equation}
\ca Indeed, by construction of $t_0$ and from the continuity in time of $w$, we deduce that $\inf_x w(t_0,x) = 0$, and because $w$ is coercive, that the infimum is located in a bounded set. \ca

From Stegall's Lemma \citep{stegall1978optimization,stegall1986optimization}, which is always true in Hilbert spaces, we deduce that for any $\epsilon > 0$, there exists $\xi \in H$, $|A\xi| \leq \epsilon$ and $x_0 \in H$ such that $x_0$ is a strict minimum of $ x \to w(t_0,x) + \langle \xi,x\rangle$ on $\{x \in H, |x|\leq R\}$. By construction of $t_0$ and $x_0$, \ca since $\psi$ is a vsicosity subsolution of the deterministic equation, we obtain that

\begin{equation}
\partial_t(\lambda \phi(t_0,x_0)) - \delta - c +\frac 12 \langle A \nabla\lambda \phi(t_0,x_0) - \xi,\nabla\lambda\phi(t_0,x_0)- \xi\rangle \geq 0.
\end{equation}
From the assumption we made on $\phi$, we can use the PDE it is a solution of to obtain that \ca
\begin{equation}
\lambda\Delta \phi(t_0,x_0) - \delta - c + (\lambda^2 - \lambda)\frac12 \langle A \nabla \phi(t_0,x_0), \nabla \phi(t_0,x_0)\rangle + \lambda \langle A \nabla \phi(t_0,x_0), \xi\rangle + \frac 12 \langle A \xi,\xi\rangle \geq 0.
\end{equation}
Recall now that $A$ is a positive operator, hence $\langle A \nabla \phi(t_0,x_0), \nabla \phi(t_0,x_0)\rangle \geq 0$. Thus we obtain that, taking $\epsilon$ small enough ($\epsilon$ was used to measure the size of $\xi$), we obtain a contradiction in the previous inequality. Hence for any $\lambda \in (0,1)$, \eqref{eq:34} holds. The required result is then proved by passing to the limit $\lambda \to 1$ in \eqref{eq:34}.

\end{proof}

\ca
\begin{Rem}
The sense in which $\psi$ is assumed to be a viscosity solution of the singular HJB equation \eqref{deterministic} could have been made precise, namely because of the singularity. However, because, we are only using this property on the test function $\phi$ which is such that $\langle A \nabla \phi,\nabla \phi\rangle$ is well defined, this does not raise any particular difficulty as any reasonable notion of viscosity solution yields the same result. Let us insist that $\langle A \nabla \phi,\nabla \phi\rangle$ is indeed well defined since $\phi$ is assumed smooth and $\Delta \phi$ bounded.
\end{Rem}
\ca

\subsection{Counterexamples to the time continuity of the solution of the deterministic problem}

We present two examples in which $\phi_0$ does not satisfy the assumptions of the previous result to justify in some sense that some assumptions have to be made.\\

The first one is an example in which $\phi_0$ does not satisfy the required growth assumption at infinity. 
\begin{Ex}
Assume that the eigenvalues of $A$ are given by $\lambda_n = (n+1)^{\alpha}, n \geq 0$ and consider $x^* \in H$ given by $x^*_n = (n+1)^{-\beta}, n \geq 0$. Let us choose $\phi_0(x) = \langle x,x^*\rangle^2$. We take $\alpha$ and $\beta$ such that $\alpha > 2 \beta > 1$. The following holds for any $x \in H$
\begin{equation}\label{resultex}
\inf_{y \in H} \left\{ \phi_0(y) + \frac{1}{2t}\langle A^{-1}(x-y), x-y \rangle\right\} \underset{t \to 0}{\longrightarrow} 0.
\end{equation}
Indeed fix $x \in H$ and consider $z \in H$ defined by
\begin{equation}
\begin{cases}
z_k = x_k, k \leq N-1\\
z_N = -(x^*_N)^{-1} \sum_{i = 0}^{N-1}x^*_iz_i,\\
z_k = 0, k \geq N+1
\end{cases}
\end{equation}
where $N$ is to be fixed later on. We compute
\begin{equation}
\begin{aligned}
\phi_0(z) + \frac{1}{2t}\langle A^{-1}(x-z), x-z \rangle &= \frac{1}{2t} \left( \lambda^{-1}_N (z_N - x_N)^2 + \sum_{i \geq N+1} \lambda^{-1}_i x_i^2\right),\\
& \leq \frac{1}{2t} \left( \lambda^{-1}_N (z_N - x_N)^2 + \sum_{i \geq N+1} \lambda^{-1}_i x_i^2\right),\\
& \leq \frac{C}{2t} \left( (N+1)^{-\alpha} (N+1)^{2\beta} + ( N+1)^{1 - \alpha} \right).
\end{aligned}
\end{equation}
where $C$ is a constant depending on $\alpha, \beta$ and $x$. Then, since $\alpha > 2 \beta > 1$, choosing $N$ such that $(N+1)^{2\beta - \alpha}t^{-1}$ is as small as we want, we conclude that \ca the infimum in \eqref{resultex} is less than or equal to $0$, since $\phi_0 \geq 0$ we obtain that \eqref{resultex} \ca  indeed holds and thus that the initial condition is not satisfied in such a case. \end{Ex}

The second example is a case in which $\phi_0$ is not convex. This non-convexity highlights the non weak lower semi continuity of $\phi_0$ which is crucial and which we also comment in the following example.
\begin{Ex}
Consider $\phi_0$ defined by
\begin{equation}
\phi_0(x) = \begin{cases} |x|^2 \text{ if } |x| \geq 1,\\ 2 - |x|^2 \text{ else.} \end{cases}
\end{equation}
For any $t > 0$, one has $\psi(t,0) = 1$. Indeed, one necessarily has $\psi(t,x) \geq \inf_{x \in H} \phi_0(x) = 1$. Then considering the sequence $y_n = e_n$ as a minimizing sequence in \eqref{laxoleinik}, it follows that $\psi(t,0) \leq 1$, hence the fact that $\psi(t,0) = 1$. This is an obvious contradiction to the fact that $\psi$ converges toward $\phi_0$ as $t \to 0$. 

More generally, it is the weak lower semi continuity of $\phi_0$ which is important to obtain the time continuity in $t= 0$ of the deterministic solution. More precisely, let us assume that there is a sequence $(x_n)_{n \geq 0}$ weakly converging toward $x \in H$ such that both $\liminf \phi_0(x_n) < \phi_0(x)$ and $(A^{-1}x_n)_{n \geq 0}$ converge (strongly) toward $A^{-1}x$. Then for any $t\geq 0$, $\psi(t,x) = \liminf \phi_0(x_n) < \phi_0(x)$.
\end{Ex}

\subsection{A priori estimates}\label{sec:apriori}
In this section we present some key a priori estimates to establish existence and uniqueness of solutions of  \eqref{hjb}. Since we are not going to use those estimates directly, \ca but rather their finite dimensional counterpart, \ca we prove them for a very specific class of functions. Nevertheless, those estimates are insightful on why the equation \eqref{hjb} is well posed.

For a smooth function $\phi : H \to \mathbb{R}$ and $i,j \geq 0$, we shall denote by $\phi_i$ the derivative of $\phi$ in the direction of $e_i$ and by $\phi_{ij}$ the derivative of $\phi_i$ in the direction of $e_j$. 

We define the space $\mathcal{B}$ of functions $[0,\infty)\times H \to \mathbb{R}$ such that
\begin{itemize}
\item $\phi \in \mathcal{C}^4$.
\item For all $i,j \geq 0,  \Delta \phi_{i}, \Delta \phi_{ij}$ are well defined.
\item For all $i \geq 0$, $t \geq 0$, $\sum_{k \geq N}\partial_{kk} \phi_{ii}(t,x) \to 0$ as $N \to \infty$, uniformly in $x \in H$.
\item $\nabla \phi, \nabla \phi_i, \nabla\phi_{ij} \in Dom(A)$.
\item $\forall t \geq 0, \|\phi_{ii}(t,\cdot)\|_{\infty} < \infty$.
\end{itemize} 
We acknowledge that this space $\mathcal{B}$ can seem a bit arbitrary at first. It is a suitable set in which the following a priori estimates are not too difficult to prove. \ca It satisfies the important property that any smooth function on the finite dimensional space $H_N$ can be lifted into $\mathcal{B}$. In particular, functions $\phi \in \mathcal{B}$ which are solutions of \eqref{hjb} satisfy the assumption of Proposition \ref{prop:est0}. \ca \\

As we already mentioned, the regularity of solutions of \eqref{hjb} can be established with a key a priori estimate on the second order derivatives of the solution that we now present.
\begin{Prop}\label{prop:estapriori}
If $\phi \in \mathcal{B}$ is a solution of \eqref{hjb}, then for all $t> 0$, $i \geq 0$, $x\in H$, 
\begin{equation}\label{estii}
\phi_{ii} (t,x) \leq \frac{\phi_{ii}^0}{1 + \lambda_i \phi_{ii}^0 t},
\end{equation}
where $\phi_{ii}^0 := \|\phi_{ii |t = 0}\|_{\infty}$. \ca It also holds that for any $t > 0, x, \xi \in H$,
\be\label{estD2}
\xi D^2\phi(t,x)\xi \leq t^{-1}\langle A^{-1}\xi,\xi\rangle.
\ee
\ca
Moreover, if $\phi_{|t = 0}$ is convex, then $\phi(t)$ is convex for all time. If $\phi_{|t = 0}$ is bounded from below by a constant, then $\phi$ is bounded from below by the same constant. \end{Prop}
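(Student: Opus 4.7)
The strategy is to differentiate \eqref{hjb} in space and to apply a parabolic maximum or minimum principle to the linear equations satisfied by the various derivatives. All the computations below are justified in the class $\mathcal{B}$, which was designed precisely so that the Hessian, the Laplacian, the gradients in $\mathrm{Dom}(A)$ and the sup norms at stake are well defined. The main subtlety, as in the proof of Proposition \ref{prop:est0}, is that in infinite dimensions the supremum or infimum of a bounded smooth function over $x \in H$ need not be attained; I would handle this via Stegall's variational principle, which produces approximate extremizers.

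For the first estimate \eqref{estii}, I would twice differentiate \eqref{hjb} in direction $e_i$, obtaining
\[
\partial_t \phi_{ii} - \Delta \phi_{ii} + \langle A \nabla \phi, \nabla \phi_{ii}\rangle + \langle A \nabla \phi_i, \nabla \phi_i\rangle = 0.
\]
Since $A$ is diagonal in the basis $(e_j)$ with eigenvalues $\lambda_j > 0$, the last term equals $\sum_j \lambda_j \phi_{ij}^2$, which is at least $\lambda_i \phi_{ii}^2$. Setting $M(t) = \sup_x \phi_{ii}(t,x)$ (finite thanks to the definition of $\mathcal{B}$) and applying the maximum principle via the Stegall perturbation argument of Proposition \ref{prop:est0}, one deduces the differential inequality $\dot{M}(t) + \lambda_i M(t)^2 \leq 0$. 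Since $M(0) = \phi_{ii}^0$, integrating this ODE yields \eqref{estii}.

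For \eqref{estD2}, I would extend the previous computation to an arbitrary direction $\xi \in H$. Setting $v(t,x) = \langle D^2\phi(t,x)\xi, \xi\rangle$ and differentiating \eqref{hjb} twice in direction $\xi$,
\[
\partial_t v - \Delta v + \langle A \nabla \phi, \nabla v\rangle + \langle A D^2\phi\,\xi,\, D^2\phi\,\xi\rangle = 0.
\]
Applying Cauchy-Schwarz in the inner product induced by $A$ yields $v^2 \leq \langle A^{-1}\xi,\xi\rangle \,\langle A D^2\phi\,\xi, D^2\phi\,\xi\rangle$; writing $\alpha := \langle A^{-1}\xi, \xi\rangle$, the PDE then forces $\partial_t v - \Delta v + \langle A \nabla \phi, \nabla v\rangle + v^2/\alpha \leq 0$. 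The same Stegall-based maximum principle gives $\dot V + V^2/\alpha \leq 0$ for $V(t) = \sup_x v(t,x)$; comparison with the ODE $\dot u = -u^2/\alpha$ then produces the universal bound $V(t) \leq \alpha/t$ for $t > 0$, independently of the initial value. This is precisely \eqref{estD2}, and it is worth noting that, unlike \eqref{estii}, no regularity assumption at $t=0$ enters.

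The remaining two statements follow from minimum principles of the same flavor. For the preservation of convexity, the reaction term $\langle A D^2\phi\,\xi, D^2\phi\,\xi\rangle$ is non-negative since $A$ is positive, so the minimum principle applied to the equation satisfied by $v$ (using Stegall on $-v$ to get approximate minimizers) shows that if $v(0,\cdot)\geq 0$ for every $\xi$ then $v(t,\cdot) \geq 0$ for every $t \geq 0$ and every $\xi$. For the lower bound, I would apply the minimum principle directly to $\phi$: at an approximate spatial minimum, $\nabla\phi \approx 0$ and $\Delta\phi \geq 0$, so $\partial_t \phi = \Delta \phi - \tfrac12 \langle A\nabla\phi, \nabla\phi\rangle \geq 0$ up to a perturbation that can be sent to zero, hence $\inf_x \phi(t,x)$ is non-decreasing in $t$. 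The main obstacle throughout is precisely the rigorous implementation of these maximum principles in infinite dimensions; the Stegall technique already used in Proposition \ref{prop:est0} provides the required extremizers, and the error terms it introduces can be controlled uniformly thanks to the regularity built into the class $\mathcal{B}$.
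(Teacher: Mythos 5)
Most of your argument follows the paper's own strategy and is sound: differentiating twice gives exactly the paper's equation for $\phi_{ii}$ (with $\langle A\nabla\phi_i,\nabla\phi_i\rangle=\sum_j\lambda_j\phi_{ij}^2\geq\lambda_i\phi_{ii}^2$), and the Stegall-based maximum principle plus Riccati comparison yields \eqref{estii}; for \eqref{estD2} your Cauchy--Schwarz step $v^2\leq\langle A^{-1}\xi,\xi\rangle\,\langle AD^2\phi\,\xi,D^2\phi\,\xi\rangle$ followed by comparison with $\alpha/t$ is a legitimate variant of the paper's observation that $(t,x,\xi)\mapsto t^{-1}\langle A^{-1}\xi,\xi\rangle$ solves the equation satisfied by $w(t,x,\xi)=\xi D^2\phi(t,x)\xi$; and the lower bound via approximate minima is the paper's comparison with constants.

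However, your propagation-of-convexity step has a genuine sign error. With $\xi$ fixed, $v=\langle D^2\phi\,\xi,\xi\rangle$ satisfies $\partial_t v-\Delta v+\langle A\nabla\phi,\nabla v\rangle=-Q$ with $Q=\langle AD^2\phi\,\xi,D^2\phi\,\xi\rangle\geq 0$, so $v$ is a \emph{subsolution} of the linear drift--diffusion equation: at an (approximate) spatial minimum you only get $\partial_t v\gtrsim -Q$, and the non-negativity of $Q$ pushes the infimum \emph{down}, it does not protect it. (Think of the ODE caricature $\dot v=-Q$, $v(0)=0$, $Q>0$.) So "the reaction term is non-negative, hence the minimum principle preserves $v\geq 0$" does not work direction by direction. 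The statement is still true, but the mechanism is that $Q$ \emph{degenerates at the contact point}: if convexity first fails at time $t_0$ at $(x_0,\xi_0)$, then $D^2\phi(t_0,x_0)\geq 0$ and $\langle D^2\phi(t_0,x_0)\xi_0,\xi_0\rangle=0$ force $D^2\phi(t_0,x_0)\xi_0=0$, hence $Q=0$ there; making this rigorous requires tracking the minimum over $x$ \emph{and} $\xi$ simultaneously. This is exactly what the paper's device accomplishes: viewing $w(t,x,\xi)$ as a solution of the HJB-type equation \eqref{eqxxi}, where the quadratic term becomes $\tfrac14\langle A\nabla_\xi w,\nabla_\xi w\rangle$ and therefore vanishes at a joint minimum in $(x,\xi)$, so that comparison with the solution $0$ applies. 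You should either adopt that reformulation or insert the kernel-degeneracy argument at the first crossing time; as written, this step of your proof fails.
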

\begin{Rem}
\ca Note that the previous result is a proof that the equation regularizes the initial condition. \ca
\end{Rem}
\begin{proof}\textit{(Of Proposition \ref{prop:estapriori}.)}
First let us remark that constants are solution of \eqref{hjb}, thus if $\phi_{|t = 0}$ is bounded from below by a constant, then $\phi$ is bounded from below by the same constant, using a comparison principle type result. This can be proved in exactly the same fashion as Proposition \ref{prop:est0} and thus we do reproduce the argument here for the sake of clarity.\\

For all $i,j \geq 0$, $\phi_{ij}$ is a solution of
\begin{equation}\label{eqphiij}
\partial_t \phi_{ij} - \Delta \phi_{ij} + \langle A\nabla \phi, \nabla \phi_{ij} \rangle + \sum_k \lambda_k \phi_{ki}\phi_{kj} = 0 \text{ in } (0,\infty)\times H. 
\end{equation}
Choosing $i = j$ in the previous equation yields
\begin{equation}\label{eqphiii}
\partial_t \phi_{ii} - \Delta \phi_{ii} + \langle A \nabla \phi,\nabla \phi_{ii}\rangle + \lambda_i \phi_{ii}^2\leq 0 \text{ in } (0,\infty) \times H.
\end{equation}
Assume now that \eqref{estii} does not hold and thus that there exists, $t_*,x_*$ such that 
\begin{equation}
\phi_{ii}(t_*,x_*) >\frac{\phi_{ii}^0}{1 + \lambda_i \phi_{ii}^0 t_*}.
\end{equation}
Let us define $u(t,x) = \phi_{ii}(t,x) - \frac{\phi_{ii}^0}{1 + \lambda_i \phi_{ii}^0 t} - \delta t - \epsilon |x|^2$. By construction of $(t_*,x_*)$, there exists $\delta, \epsilon > 0$ such that $u(t_*,x_*) > 0$. We consider such a pair $(\delta,\epsilon)$. Let us now define, for some $N \geq 1$, $t_0$ by
\begin{equation}
t_0 = \inf \{t > 0, \exists x \in H_N, u(t,(x,0)) > 0\}.\footnote{Let us recall that $(x,0)$ denotes the element of $H$ whose $N$ first components are equal to $x$ and the rest are set to $0$.}
\end{equation}
\ca For $N \geq 1$ sufficiently large, the set on which the infimum is taken is non-empty. Indeed, from the continuity of $u$, we know that $u(t_*,proj_{H_N}x_*) \to u(t_*,x_*)$ as $N \to \infty$. \ca It is clear that $t_0$ depends on $N$, however let us recall that it is bounded in $N$ since $u$ is continuous in $H$. Since $\phi_{ii}$ is bounded (indeed $\phi \in \mathcal{B}$), we know that there exists $R >0$ such that 
\begin{equation}
\sup \{u(t_0,(x,0)), x \in H_N, |x| \leq R\} = 0.
\end{equation}
Let us insist on the fact that $R> 0$ can be chosen independently of $N$ here. Hence, using once again Stegall's Lemma \citep{stegall1978optimization,stegall1986optimization}, we know that for any $\epsilon' > 0$ (independent of $N$), there exists $\xi \in H_N$, $|A\xi| \leq \epsilon'$ such that $x \to u(t,x) + \langle \xi,x\rangle$ has a strict maximum at some point $x_0 \in H_N$. Hence at we deduce the following
\begin{equation}
\begin{cases}
u(t_0,x_0) = O(\epsilon'),\\
\partial_t \phi_{ii}(t_0,(x_0,0)) \geq \delta - \lambda_i\left(\frac{\phi_{ii}^0}{1 + \lambda_i \phi_{ii}^0t_0}\right)^2,\\
\nabla_N \phi_{ii}(t_0,(x_0,0)) = 2\epsilon x_0 - \xi,\\
\sum_{k = 1}^N \partial_{kk}\phi_{ii} \leq 2 \epsilon N.
\end{cases}
\end{equation}
Evaluating \eqref{eqphiii} at $(t_0,(x_0,0))$, we obtain
\begin{equation}
\delta - \lambda_i\left(\frac{\phi_{ii}^0}{1 + \lambda_i \phi_{ii}^0}\right)^2 -2\epsilon N - \sum_{k \geq N+1} \partial_{kk}\phi_{ii} + \langle A \nabla_N \phi,2 \epsilon x_0- \xi\rangle + \lambda_i \phi_{ii}^2(t_0,(x_0,0)) \leq 0.
\end{equation}
Using the first line of \eqref{eqphiii}, we deduce
\begin{equation}
\delta - 2 \epsilon N - \sum_{k \geq N+1} \partial_{kk} \phi_{ii} + O(\epsilon |x_0|) + O(\epsilon') \leq 0.
\end{equation}
Let us now remark that, \ca from the construction of $(t_0,x_0)$, $\epsilon|x_0|^2$ remains bounded as $\epsilon \to 0$. Hence, we deduce that $\epsilon|x_0| \to 0$ as $\epsilon \to 0$. \ca  Moreover, using the uniform summability of $\Delta \phi_{ii}$ \ca($\phi \in \mathcal{B})$ \ca, we take $N$ large enough, then we take $\epsilon \to 0$ and then we take $\epsilon' \to 0$ to arrive at a contradiction in the previous equation. \ca Thus the claim on the upper bound on $\phi_{ii}$ follows.\\

We now prove the propagation of the convexity and the estimate on the Hessian. In order to do so, we introduce the function $w : \mathbb{R}_+ \times H^2 \to \mathbb{R}$ defined by
\be
w(t,x,\xi) = \sum_{i,j\geq 0} \phi_{ij}\xi_i \xi_j,
\ee
From \eqref{eqphiij}, it follows that $w$ is a solution of
\be\label{eqxxi}
\partial_t w - \Delta_x w + \langle A \nabla \phi,\nabla_x w\rangle + \frac 14 \langle A \nabla_\xi w,\nabla_\xi w\rangle = 0\text{ in } (0,\infty)\times H^2,
\ee
where $\nabla_x w$ and $\Delta_x w$ stands for respectively the gradient and the Laplacian of $w$ with respect to $x$ while $\nabla_\xi w$ is its gradient with respect to $\xi$. Once again, since $w$ is a classical solution of the previous equation (because $\phi \in \mathcal{B}$), by using a comparison result between $w$ and the constant $0$, we deduce that $w \geq 0$ if it is the case at the initial time, which implies the propagation of the convexity of $\phi(t,\cdot)$ for all $t \geq 0$.

\ca To prove the estimate on the Hessian, it suffices to remark that $(t,x,\xi) \to t^{-1}\langle A^{-1}\xi,\xi\rangle$ is a solution of \eqref{eqxxi}. Hence the result follows once again from a comparison principle.\ca

\end{proof}

We then deduce the following.
\begin{Cor}
If $\phi \in \mathcal{B}$ is a smooth solution of \eqref{hjb} with convex initial data, then for any $t > 0$, $i,j \geq 0$
\begin{equation}
|\phi_{ij}(t)| \leq \sqrt{\frac{\phi_{ii}^0}{1 + \lambda_i \phi_{ii}^0 t}}\sqrt{\frac{\phi_{jj}^0}{1 + \lambda_j \phi_{jj}^0 t}}
\end{equation}
\end{Cor}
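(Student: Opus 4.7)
The plan is to combine the two conclusions of Proposition \ref{prop:estapriori} via the elementary Cauchy--Schwarz inequality for positive semi-definite symmetric bilinear forms. By that proposition, since $\phi_{|t=0}$ is convex, the solution $\phi(t,\cdot)$ remains convex for all $t>0$, so its Hessian $D^2\phi(t,x)$ is a positive semi-definite symmetric operator on $H$.

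First, I would record the standard fact: if $M$ is a symmetric positive semi-definite operator and $(e_i)_{i\geq 0}$ any orthonormal basis, then for all $i,j$,
\begin{equation}
|\langle Me_i,e_j\rangle| \leq \sqrt{\langle Me_i,e_i\rangle}\,\sqrt{\langle Me_j,e_j\rangle}.
\end{equation}
This is immediate from applying the Cauchy--Schwarz inequality to the nonnegative bilinear form $(u,v)\mapsto \langle Mu,v\rangle$, using that the associated seminorm is $u\mapsto \sqrt{\langle Mu,u\rangle}$. Applied to $M = D^2\phi(t,x)$, this yields
\begin{equation}
|\phi_{ij}(t,x)| \leq \sqrt{\phi_{ii}(t,x)}\,\sqrt{\phi_{jj}(t,x)}.
\end{equation}

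Next, I would invoke the diagonal estimate \eqref{estii} from Proposition \ref{prop:estapriori}, which asserts that
\begin{equation}
\phi_{ii}(t,x) \leq \frac{\phi_{ii}^0}{1+\lambda_i\phi_{ii}^0 t}, \qquad \phi_{jj}(t,x)\leq \frac{\phi_{jj}^0}{1+\lambda_j\phi_{jj}^0 t}.
\end{equation}
Here, to apply the square root monotonically, I need to check that these diagonal entries are nonnegative; but this follows again from convexity, since $\phi_{ii}(t,x) = \langle D^2\phi(t,x)e_i,e_i\rangle \geq 0$. Substituting into the Cauchy--Schwarz bound yields exactly the claimed estimate.

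There is no real obstacle here: the proof is simply the composition of the convexity propagation and the diagonal upper bound already established in Proposition \ref{prop:estapriori}, with the Cauchy--Schwarz inequality providing the bridge between diagonal and off-diagonal entries. The only mildly delicate point is making sure that the Cauchy--Schwarz step is legitimate even in the infinite-dimensional Hilbert setting, which it is, since it only uses the positive semi-definiteness of the bilinear form $(u,v)\mapsto \langle D^2\phi(t,x)u,v\rangle$ on the two-dimensional subspace $\mathrm{Span}(e_i,e_j)$.
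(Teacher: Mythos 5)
Your proof is correct and follows essentially the same route as the paper: the paper also exploits the positive semi-definiteness of $D^2\phi(t,x)$ restricted to $\mathrm{Span}(e_i,e_j)$ (stated there as the non-negativity of the quadratic form $\phi_{ii}\xi_i^2+\phi_{jj}\xi_j^2+2\phi_{ij}\xi_i\xi_j$, which is exactly your Cauchy--Schwarz step) together with the diagonal bound \eqref{estii} from Proposition \ref{prop:estapriori}. Your explicit check that the diagonal entries are nonnegative before taking square roots is a welcome, if minor, clarification.
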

\ca \begin{proof}
This Corollary simply follows from the non-negativity of the Hessian matrix of $\phi$ as well as form the bounds on $\phi_{ii}$. Indeed, take $i,j \geq 0$ and evaluate the non-negativity of the Hessian on an element $\xi \in H$ such that $\xi_k = 0$ if $k \notin \{i;j\}$. Then for any $t > 0, x \in H$,
\be
\phi_{ii}(t,x)\xi_i^2 + \phi_{jj}(t,x)\xi_j^2 + 2 \phi_{ij}(t,x)\xi_i \xi_j \geq 0.
\ee
Hence the result follows from the bound on $\phi_{ii}$ and $\phi_{jj}$.
\end{proof} \ca

We now establish the extension of a classical estimate on the gradient of a convex function by its second order derivatives in finite dimension.

\begin{Prop}\label{prop:grad}
Assume $\phi \in \mathcal{B}$ is a smooth solution of \eqref{hjb} with convex initial data. Assume also that $\phi_0$ is bounded from below by some constant. \ca Then we have the following estimates on the first order derivatives of the solution, for any $t > 0, x \in H$.
\begin{equation}
|\nabla \phi(t,x)| \leq \sqrt{2}\frac{\sqrt{\phi(x) - \inf_H \phi_0}}{\sqrt{t}},
\end{equation}
\be
|\phi_i(t,x)| \leq \sqrt{2}\frac{\sqrt{\phi(x) - \inf_H \phi_0}}{\sqrt{\lambda_i t}},
\ee
\be
\langle A \nabla\phi(t,x),\nabla \phi(t,x)\rangle \leq 4\left(\frac{\phi(x) - \inf_H \phi_0}{t}\right)^{2}.
\ee
\ca
\end{Prop}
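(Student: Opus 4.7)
The plan is to derive all three estimates from a single \emph{master inequality} obtained from a second-order Taylor expansion of $\phi(t,\cdot)$ around a base point $x$. The key inputs are already available: from Proposition \ref{prop:estapriori} we have the Hessian bound \eqref{estD2}, namely $\xi^\top D^2 \phi(t,x) \xi \leq t^{-1}\langle A^{-1}\xi,\xi\rangle$ for all $\xi \in H$; the same proposition gives that $\phi(t,\cdot)$ remains convex and that the pointwise lower bound $\phi(t,x) \geq \inf_H \phi_0$ is preserved by the equation.

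First I would use the Hessian bound together with convexity to obtain a one-sided quadratic control of $\phi$:
\begin{equation*}
\phi(t,y) \;\leq\; \phi(t,x) + \langle \nabla \phi(t,x), y-x\rangle + \frac{1}{2t}\langle A^{-1}(y-x), y-x\rangle, \qquad x,y \in H,
\end{equation*}
via a Taylor expansion with integral remainder. Combining this with $\phi(t,y) \geq \inf_H \phi_0$, choosing $y = x + \alpha v$ for an arbitrary direction $v \in H$ and scalar $\alpha \in \mathbb{R}$, and taking the infimum over $\alpha$, I would obtain the master inequality
\begin{equation*}
\langle \nabla \phi(t,x), v\rangle^2 \;\leq\; \frac{2M(t,x)}{t}\,\langle A^{-1}v, v\rangle, \qquad v \in H,
\end{equation*}
where I write $M(t,x) := \phi(t,x) - \inf_H \phi_0 \geq 0$.

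The three estimates then follow by three choices of $v$. Setting $v = e_i$ uses $\langle A^{-1}e_i, e_i\rangle = \lambda_i^{-1}$ and produces the second estimate directly. Choosing $v = A\nabla\phi(t,x)$ yields $\langle A\nabla\phi, \nabla\phi\rangle^2 \leq (2M/t)\langle A\nabla\phi,\nabla\phi\rangle$, from which dividing gives the Lyapunov-type bound $\langle A\nabla\phi,\nabla\phi\rangle \leq 2M/t$, which is the content of the third estimate. Finally, taking $v = \nabla\phi(t,x)$ gives $|\nabla\phi|^4 \leq (2M/t)\langle A^{-1}\nabla\phi,\nabla\phi\rangle$, and using $A^{-1} \leq \lambda_0^{-1}\mathrm{Id} \leq \mathrm{Id}$ (which holds in the setting of the model, where $A = \mathrm{Id} - \Delta_x$ satisfies $A \geq \mathrm{Id}$) one concludes the first estimate after simplification.

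The main technical point is not hard, but requires some care: it is to legitimize the pointwise Taylor expansion and the Hessian inequality for $\phi \in \mathcal{B}$ (in particular to verify that the one-sided estimate $\phi(t,y) \leq \phi(t,x) + \langle \nabla\phi, y-x\rangle + (2t)^{-1}\langle A^{-1}(y-x), y-x\rangle$ is valid globally in $y$, not only infinitesimally). Once this is granted, the rest of the argument is purely algebraic. A secondary subtlety is that the first estimate, as stated with $1/\sqrt{t}$ rather than $1/\sqrt{\lambda_0 t}$, uses implicitly $\lambda_0 \geq 1$, which is built into the modelling operator $\mathrm{Id} - \Delta_x$.
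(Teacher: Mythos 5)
Your master inequality is the paper's own argument in a repackaged form: the paper fixes a direction $v$, considers the one-variable function $\psi(\theta)=\phi(t,x+\theta v)$, bounds $\psi''$ by the Hessian estimate \eqref{estD2} of Proposition \ref{prop:estapriori}, and uses that the resulting quadratic expression in $\theta$ cannot change sign, which is exactly your optimization over $\alpha$ combined with the propagated lower bound $\phi(t,\cdot)\geq\inf_H\phi_0$. Your treatment of the first two estimates therefore coincides with the paper's proof (which takes the directions $\nabla\phi(t,x)$ and $e_i$), including the step $\langle A^{-1}\nabla\phi,\nabla\phi\rangle\leq|\nabla\phi|^2$, i.e.\ $\lambda_0\geq1$, which the paper performs silently; your justification of the global one-sided Taylor bound for $\phi\in\mathcal{B}$ is unproblematic.

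The gap is in the third estimate. Taking $v=A\nabla\phi(t,x)$ and dividing gives $\langle A\nabla\phi,\nabla\phi\rangle\leq 2M/t$ with $M=\phi(t,x)-\inf_H\phi_0$, and this is \emph{not} the content of the stated inequality: the Proposition asserts the bound $4(M/t)^2$, and $2M/t\leq 4(M/t)^2$ only when $t\leq 2M$, so in the regime $t>2M$ your inequality does not imply the one to be proved (nor conversely). Structurally, a single use of the Hessian bound can only produce a right-hand side linear in $M/t$; the square in the statement arises because the paper reuses the gradient bound a second time. Concretely, having proved the first estimate, the paper runs the same one-dimensional argument along the direction $A^{1/2}\nabla\phi(t,x)$ and bounds $\|\psi''\|_\infty\leq t^{-1}|\nabla\phi(t,x)|^2\leq 2M/t^2$ by the first estimate, so that the discriminant inequality delivers a right-hand side of size $4(M/t)^2$; this second pass is the step missing from your argument. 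Your linear bound $2M/t$ (which is also what the change of variable $v(t,Bx)=\phi(t,x)$ gives, since then $D^2v\leq t^{-1}\mathrm{Id}$) is a perfectly good, indeed sharper, estimate when $t\leq 2M$, but as a proof of the Proposition as written the third inequality is not established.
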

This type of inequalities is more or less standard. We include a proof for the sake of completeness.
\begin{proof}
\ca We first prove the first estimate. \ca Let us consider $t > 0, x \in H$ and consider the function $ \psi : \mathbb{R} \to \mathbb{R}$ defined by
\begin{equation}
\psi(\theta) = \phi(t,x + \theta \nabla \phi(t,x)).
\end{equation}
Let us observe that $\psi$ is a smooth function and that
\begin{equation}
\psi'(\theta) = \langle \nabla \phi(t,x + \theta \nabla \phi(t,x)) , \nabla \phi(t,x) \rangle,
\end{equation}
\begin{equation}
\psi''(\theta) = \langle \nabla \phi(t,x) D^2 \phi(t,x + \theta \nabla \phi(t,x)) , \nabla \phi(t,x) \rangle.
\end{equation}
For any $\theta$, the following holds
\begin{equation}
\psi(\theta) = \psi(0) + \psi'(0) \theta + \frac{\theta(\theta - z)}{2}\psi''(z),
\end{equation}
for some $z \in [0,\theta]$. Hence
\begin{equation}
0\leq (\psi(0) - \inf_{H} \psi )  + \psi'(0) \theta + \frac{\theta^2 }{2}\|\psi''\|_{\infty}.
\end{equation}
This second order expression in $\theta$ does not change sign, thus
\begin{equation}\label{eq:551}
(\psi'(0))^2 \leq 2 \|\psi''\|_{\infty}(\psi(0) - \inf_{H} \psi ).
\end{equation}
\ca From Proposition \ref{prop:estapriori}, we deduce that
\be
\|\psi''\|_{\infty} \leq t^{-1} \langle A^{-1}\nabla \phi(t,x),\nabla \phi(t,x)\rangle.
\ee
The previous is less or equal than $t^{-1}|\nabla \phi(t,x)|^2$. Hence, we deduce from \eqref{eq:551} that 
\be
|\nabla \phi(t,x)|^2 \leq \frac{2(\phi(t,x) - \inf_H \phi)}{t}.
\ee
Hence the first inequality follows. The other two can be proven in exactly the same fashion by considering $\psi(\theta) = \phi(t,x + \theta\phi_i(t,x))$ and $\psi(\theta) = \phi(t,x + \theta A^{\frac 12}\nabla \phi(t,x))$, and by using the first estimate to prove the third one. 
\ca

\end{proof}

\subsection{A formal change of variable}
We present a change of variable which simplifies the PDE \eqref{hjb}. Formally if $\phi$ is a solution of \eqref{hjb} then defining $v$ by 
\begin{equation}\label{changev}
v(t,Bx) = \phi(t,x),
\end{equation}
where $B = A^{-1/2}$, we observe that formally $v$ satisfies
\begin{equation}\label{eqv}
\partial_t v + \frac{1}{2} |\nabla v |^2 - Tr(B^2D^2v) = 0, \text{ in } (0,T)\times B(H).
\end{equation}
\ca Let us remark that this equation falls into the classical literature on infinite dimensional second order equations, see for instance \citep{fabbri2017stochastic} which treats exactly this kind of equation. For instance, we can remark that thanks to \eqref{hypA}, the second order term is well defined for any smooth function $v$. However, when this change of variable has been made, it is by no mean clear what becomes of the initial condition, as in general, such a change of variable will send $\phi_0$ to an ill-defined function on $H$. Hence, this equation is classical for $t > 0$ but singular at $t = 0$. Let us insist on the fact that, in general, this change of variable only defines $v$, and thus the equation, on $B(H)$ on not on $H$. We shall come back on this difficulty later on.\ca

\section{Existence and uniqueness of solutions}
\subsection{Existence of a solution}For any $N \geq 0$, let us consider the Hilbert space $H_N = Span(\{e_1,...,e_N\})$. By construction $H_N$ is \ca invariant under $A$ \ca. The $N$ dimensional problem associated to \eqref{hjb} is
\begin{equation}\label{hjbN}
\partial_t \phi - \Delta_N \phi + \frac{1}{2}\langle A \nabla \phi, \nabla \phi \rangle = 0 \text{ in } (0,\infty)\times H_N,
\end{equation}
where we used the notation $\Delta_N$ to insist on the fact that we are here in $H_N$. The unknown is $\phi : [0,\infty)\times H_N \to \mathbb{R}$. The idea we are going to follow in this section is that, with an appropriate choice of initial conditions, the sequence of solutions of \eqref{hjbN} converges toward a solution of \eqref{hjb}, in a sense to be made precise later on.\\

We start with recalling the following result for problem \eqref{hjbN}.

\begin{Theorem}\label{thm:finite}
Assume $\phi_0^N \in \mathcal{C}^{1,1}(H_N)$ is convex and bounded from below, then there exists a unique viscosity solution $\phi^N$ of \eqref{hjbN} with initial condition $\phi_0^N$ which satisfies the conclusions of Propositions \ref{prop:estapriori} and \ref{prop:grad}. \end{Theorem}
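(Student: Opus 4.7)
The plan is to construct $\phi^N$ as the limit of solutions $\phi^{N,\epsilon}$ of regularized problems with globally Lipschitz Hamiltonians, using classical finite-dimensional parabolic theory at the regularized level and the $\epsilon$-uniform estimates of Section \ref{sec:apriori} to pass to the limit. Specifically, I would introduce a smooth convex truncation $H^\epsilon:H_N\to\R$ coinciding with $p\mapsto\tfrac12\langle Ap,p\rangle$ on $\{|p|\le 1/\epsilon\}$ and globally Lipschitz outside. Since $\phi_0^N\in\mathcal{C}^{1,1}(H_N)$ and the equation $\partial_t\phi^{N,\epsilon}-\Delta_N\phi^{N,\epsilon}+H^\epsilon(\nabla\phi^{N,\epsilon})=0$ is a uniformly parabolic semilinear PDE on $\R^N$ with Lipschitz nonlinearity, classical theory (Schauder estimates plus a fixed point argument, or a Feynman--Kac representation) produces a unique classical solution $\phi^{N,\epsilon}$, and parabolic bootstrap yields $\mathcal{C}^\infty$ regularity for $t>0$; in particular $\phi^{N,\epsilon}\in\mathcal{B}$ since we are in finite dimension.

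I would then verify that the derivations of Propositions \ref{prop:estapriori} and \ref{prop:grad} adapt to $\phi^{N,\epsilon}$. The only change is that the nonlinear term $\lambda_i\phi_{ii}^2$ appearing in equation \eqref{eqphiii} is replaced by $\langle D^2H^\epsilon(\nabla\phi^{N,\epsilon})\nabla\phi^{N,\epsilon}_i,\nabla\phi^{N,\epsilon}_i\rangle$, which remains non-negative by convexity of $H^\epsilon$ and recovers the original expression whenever $\nabla\phi^{N,\epsilon}$ lies in the region $\{H^\epsilon=H\}$. This yields the same one-sided bounds $\phi^{N,\epsilon}_{ii}(t,x)\le \phi^0_{ii}/(1+\lambda_i\phi^0_{ii}t)$ and $D^2\phi^{N,\epsilon}(t,x)\le t^{-1}A^{-1}$, propagation of convexity and of the lower bound, and the gradient estimate of Proposition \ref{prop:grad}, all uniformly in $\epsilon$. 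In particular $|\nabla\phi^{N,\epsilon}|$ is locally bounded uniformly, so on any compact $K\subset(0,\infty)\times H_N$ we have $H^\epsilon(\nabla\phi^{N,\epsilon})=\frac12\langle A\nabla\phi^{N,\epsilon},\nabla\phi^{N,\epsilon}\rangle$ for $\epsilon$ small enough. Interior parabolic Schauder estimates applied to the linearized equation give local uniform $\mathcal{C}^{2+\alpha,1+\alpha/2}$ bounds, so Arzel\`a-Ascoli extracts a subsequence converging in $\mathcal{C}^{1,2}_{loc}((0,\infty)\times H_N)$ to a classical solution $\phi^N$ of \eqref{hjbN} which inherits all these bounds. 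Continuity up to $t=0$ follows from the uniform estimate $|\partial_t\phi^{N,\epsilon}(t,x)|\le C$ on compact subsets of $[0,\infty)\times H_N$ coming from the $\mathcal{C}^{1,1}$ regularity of $\phi_0^N$ combined with the uniform control of $\nabla\phi^{N,\epsilon}$ and $\Delta_N\phi^{N,\epsilon}$.

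Uniqueness within the class of viscosity solutions satisfying the conclusions of Propositions \ref{prop:estapriori} and \ref{prop:grad} then follows from the standard comparison principle for second-order parabolic viscosity solutions on $\R^N$: such solutions have locally bounded subdifferential, so the quadratic Hamiltonian can be replaced by a locally Lipschitz one without affecting the equation on the relevant range, and the classical Crandall--Ishii--Lions comparison theorem applies. The main, and essentially only, delicate point in the whole argument is verifying that the a priori estimates of Section \ref{sec:apriori} pass through the truncation; this works because those proofs only exploit convexity of $H$ in $p$ and non-negativity of $\langle A\cdot,\cdot\rangle$, both of which the truncation $H^\epsilon$ preserves.
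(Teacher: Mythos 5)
Your overall strategy (truncate the Hamiltonian, solve the regularized semilinear problem, pass to the limit) is a legitimate route, and it differs from the paper, whose sketch instead isolates the unboundedness of $\phi_0^N$ from above as the only real difficulty and handles it by bounding $\phi^N$ through the deterministic Lax--Oleinik solution as in Proposition \ref{prop:est0}, after which standard fixed-point arguments apply. However, your middle step contains a genuine gap: the claim that the sharp estimates of Proposition \ref{prop:estapriori}, namely $\phi^{N,\epsilon}_{ii}\le \phi^0_{ii}/(1+\lambda_i\phi^0_{ii}t)$ and $D^2\phi^{N,\epsilon}\le t^{-1}A^{-1}$, hold \emph{uniformly in $\epsilon$} does not follow from convexity of $H^\epsilon$. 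These bounds come from global comparison/maximum-principle arguments that use the full quadratic structure: the term $\lambda_i\phi_{ii}^2$ in \eqref{eqphiii} requires $\langle D^2H^\epsilon(\nabla\phi)\nabla\phi_i,\nabla\phi_i\rangle\ge\sum_k\lambda_k\phi_{ki}^2$, i.e. $D^2H^\epsilon=A$, and $t^{-1}\langle A^{-1}\xi,\xi\rangle$ solves \eqref{eqxxi} only for the untruncated Hamiltonian. At the contact points of these arguments you have no a priori control on $|\nabla\phi^{N,\epsilon}|$ (that control is precisely what is being proved), so you cannot assert they lie in $\{H^\epsilon=H\}$; where the truncation is active, convexity only yields the non-decaying bound $\phi^{N,\epsilon}_{ii}\le\|\phi^0_{ii}\|_\infty$. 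The gap is repairable: the bounds that do survive the truncation uniformly (Hessian bounded by the $\mathcal{C}^{1,1}$ constant of $\phi_0^N$, propagation of convexity and of the lower bound, hence $|\nabla\phi^{N,\epsilon}|^2\le 2C(\phi^{N,\epsilon}-\inf\phi_0^N)$, combined with an upper bound by the heat flow of $\phi_0^N$ since $H^\epsilon\ge0$) give local gradient bounds uniform in $\epsilon$, which is all you need to de-truncate on compact sets; the sharp estimates should then be proved \emph{a posteriori} for the limit $\phi^N$, which is a genuine solution of \eqref{hjbN}. As written, the ``uniformly in $\epsilon$'' assertion is a non sequitur.

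The uniqueness step is also under-argued. Under Hypothesis-type assumptions on $\phi_0^N$ the solutions are unbounded from above (quadratic growth), with gradients growing linearly in $x$ by Proposition \ref{prop:grad}, so the vanilla Crandall--Ishii--Lions comparison theorem does not apply off the shelf even after replacing the Hamiltonian by a locally Lipschitz one on ``the relevant range'': the doubling argument must be localized for unbounded sub- and supersolutions, e.g. by penalizing with $\beta\sqrt{1+|x|^2}$ or by invoking Ishii/Alvarez-type comparison results for unbounded solutions, exactly as the paper does later in the proof of Theorem \ref{thm:unique}, and one must also handle the initial layer with unbounded data (this is where the paper's comparison with the deterministic solution, uniform in the growth of $\phi_0^N$, is doing real work). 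So your architecture can be made to work, but both the sharp a priori estimates and the comparison principle for unbounded solutions need explicit arguments rather than an appeal to classical finite-dimensional theory.
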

\ca
The previous result belongs in some sense to a gray area of the literature on quasi-linear parabolic equations, for which we refer to \citep{lady} for a more complete presentation. Indeed, it should not appear too surprising to specialists, however, we were not able to find it stated somewhere, mainly because of the fact the initial condition is not bounded from above. We thus sketch the main argument to prove this result, while leaving the details of such a proof to the interested reader.\\
\\
\textit{Sketch of the proof of Theorem \ref{thm:finite}.} As we already said, the main difficulty in proving this result lies in the fact that the initial condition is not bounded from above. This difficulty can be overcome by realizing that from the assumption we made on the initial condition, bounding the solution of \eqref{hjbN} by using the solution of the associated deterministic problem can be done in the same way as shown in Proposition \ref{prop:est0}. Note that, in general, this estimate depends only on $\phi_0^N$, namely in its regularity and growth. This kind of technique yields an upper-bound which is sufficient to apply usual fixed point techniques to \eqref{hjbN}.
\qed

\ca
We now use in some sense the a priori estimates we presented earlier, or rather their proof as we are going to use them at the level of $H_N$. Before doing so, let us indicate the choice of the initial condition we use for \eqref{hjbN}. Several choices are suitable, and we use here the simplest one. We set for $x \in H_N$
\be
 \phi_0^N(x) = \phi_0((x,0))
\ee 
where $(x,y)$ is the vector whose first $N+1$ coordinates are given by $x$ and the other coordinates by $y$.
\ca In order to use Theorem \ref{thm:finite}, we have to ensure that the initial condition constructed in \eqref{initialN} indeed satisfies the required properties. In order to do so, we make in a first time the following assumption, which we shall relax partially in a second time.
\begin{hyp}\label{hypphi0}
The function $\phi_0 : H \to \R$ is $\mathcal{C}^{1,1}$, convex, bounded from below and satisfies 
\be\label{initialN}
\phi_0(x) \to \infty \text{ as } |x|\to \infty.
\ee
\end{hyp}
The following is immediate.
\begin{Prop}
Assume that Hypothesis \ref{hypphi0} holds, then for all $N > 1$, $\phi^N_0$ is convex, bounded from below, $\mathcal{C}^{1,1}$ and satisfies $\phi_0^N(x) \to \infty$ as $|x|\to \infty$. The $\mathcal{C}^{1,1}$ norm of $\phi_0^N$ is bounded by a constant depending only on the $\mathcal{C}^{1,1}$ norm of $\phi_0$.
\end{Prop}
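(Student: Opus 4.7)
The plan is to observe that $\phi_0^N$ is simply the composition of $\phi_0$ with the linear isometric embedding $\iota_N : H_N \to H$ defined by $\iota_N(x) = (x,0)$, and then verify each claimed property by a functorial argument. Since $\iota_N$ is an isometry with norm $1$, and its adjoint $\iota_N^*$ is the orthogonal projection $P_N : H \to H_N$ (also of norm $\leq 1$), every estimate transfers from $\phi_0$ to $\phi_0^N$ with no loss, and in particular uniformly in $N$.

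First, I would dispatch the easy properties. Convexity of $\phi_0^N$ follows immediately since $\iota_N$ is affine and $\phi_0$ is convex:
\begin{equation*}
\phi_0^N(tx+(1-t)y) = \phi_0\bigl(t\iota_N(x)+(1-t)\iota_N(y)\bigr) \leq t\phi_0^N(x)+(1-t)\phi_0^N(y).
\end{equation*}
Boundedness from below is trivial, $\inf_{H_N}\phi_0^N \geq \inf_H \phi_0 > -\infty$. For coercivity, the point is that $|\iota_N(x)|_H = |x|_{H_N}$, so if $|x|_{H_N} \to \infty$ then $|\iota_N(x)|_H \to \infty$ and the growth assumption on $\phi_0$ yields $\phi_0^N(x) \to \infty$.

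Next, I would handle the $\mathcal{C}^{1,1}$ estimate via the chain rule. By definition $\nabla \phi_0^N(x) = P_N \nabla \phi_0(\iota_N(x))$, so for all $x,y \in H_N$,
\begin{equation*}
|\nabla \phi_0^N(x) - \nabla \phi_0^N(y)| \leq \|P_N\|\cdot\|\nabla\phi_0\|_{\mathrm{Lip}}\,|\iota_N(x)-\iota_N(y)|_H \leq \|\nabla\phi_0\|_{\mathrm{Lip}}\,|x-y|_{H_N}.
\end{equation*}
Similarly $|\nabla \phi_0^N(x)| \leq |\nabla \phi_0(\iota_N(x))|$, so the $\mathcal{C}^{1,1}$ seminorm of $\phi_0^N$ is controlled by that of $\phi_0$. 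Combined with the lower bound, this gives the full $\mathcal{C}^{1,1}$ norm control, and critically, every one of these inequalities is independent of $N$ because $\iota_N$ is an isometry and $P_N$ is a contraction for every $N$.

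There is really no hard step here; the statement is almost a tautology once one rewrites $\phi_0^N = \phi_0 \circ \iota_N$. The only small point worth flagging is that one should use the \emph{isometric} nature of $\iota_N$ (and not merely continuity) to get both the coercivity and a Lipschitz bound on $\nabla \phi_0^N$ which is uniform in $N$ — the natural norm on $H_N$ inherited as a closed subspace of $H$ is exactly the one that makes this work, and this is what lets the final sentence of the proposition assert a bound depending only on the $\mathcal{C}^{1,1}$ norm of $\phi_0$.
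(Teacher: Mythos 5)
Your proof is correct and is essentially the argument the paper has in mind: the paper simply declares the proposition ``immediate,'' and your observation that $\phi_0^N=\phi_0\circ\iota_N$ with $\iota_N$ an isometric embedding (and $\iota_N^*=P_N$ a contraction) is exactly the routine verification being skipped. The uniform-in-$N$ control of the Lipschitz constant of $\nabla\phi_0^N$ is precisely the point used later in the paper (e.g.\ in the proof of Theorem \ref{propN}), and your chain-rule estimate delivers it.
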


\ca
 We define the solution $\psi^N$ of the deterministic problem in the finite dimensional case, which is given for $t > 0, x \in H_N$ by 
\begin{equation}
\psi^N(t,x) = \inf_{y \in H_N} \left\{ \phi_0^N(y) + \frac{\langle A^{-1}(x-y), x-y \rangle}{2t}\right\}.
\end{equation}

\ca These approximations converge in a certain sense toward respectively the initial condition and the solution of the deterministic problem. Let us introduce the lifting $\hat{\phi}_0^N((x,x')) = \phi_0^N(x)$ and $\hat{\psi}^N((x,x')) = \psi^N(x)$ for $x \in H_N, x' \in H_N^{\perp}$. The following holds.
\begin{Prop}
Assume that $\phi_0$ satisfies Hypothesis \ref{hypphi0}, then $(\hat{\phi}_0^N)_{N \geq 0}$ and $(\psi^N)_{N \geq 0}$ converge pointwise toward respectively $\phi_0$ and $\psi$. 
\end{Prop}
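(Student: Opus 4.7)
The plan is to prove the two convergences separately; $\hat\phi_0^N\to\phi_0$ is essentially a continuity statement, while $\hat\psi^N\to\psi$ decomposes into upper and lower bound arguments in the spirit of Proposition \ref{prop:psi0}.

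For the first convergence, fix $z\in H$ and observe that by construction $\hat\phi_0^N(z)=\phi_0(\mathrm{proj}_{H_N}z)$. Since $\mathrm{proj}_{H_N}z\to z$ strongly in $H$ as $N\to\infty$ and $\phi_0\in\mathcal{C}^{1,1}$ is in particular continuous, the pointwise convergence $\hat\phi_0^N(z)\to\phi_0(z)$ follows immediately.

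For the pointwise convergence of $\hat\psi^N(t,x)=\psi^N(t,\mathrm{proj}_{H_N}x)$ to $\psi(t,x)$, I would first establish the upper bound $\limsup_N\hat\psi^N(t,x)\leq\psi(t,x)$. Let $y^\star\in H$ be the unique minimizer in the definition of $\psi(t,x)$ (whose existence is provided by the earlier proposition). Using $\mathrm{proj}_{H_N}y^\star\in H_N$ as a test element and exploiting that $A^{-1}$ leaves $H_N$ invariant, one obtains
\[
\hat\psi^N(t,x)\leq\phi_0(\mathrm{proj}_{H_N}y^\star)+\frac{1}{2t}\sum_{i=0}^N\lambda_i^{-1}(x_i-y^\star_i)^2,
\]
whose right-hand side converges, by continuity of $\phi_0$ and monotone convergence for the quadratic sum, to $\phi_0(y^\star)+\frac{1}{2t}\langle A^{-1}(x-y^\star),x-y^\star\rangle=\psi(t,x)$. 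For the matching lower bound, I pick for each $N$ a minimizer $y_N\in H_N$ of the problem defining $\psi^N(t,\mathrm{proj}_{H_N}x)$, which exists since $\phi_0$ and the quadratic term are both coercive on the finite-dimensional space $H_N$. Testing with $y=0$ gives a uniform upper bound on $\hat\psi^N(t,x)$, and the coercivity of $\phi_0$ then forces $(y_N)$ to be bounded in $H$. Up to extracting a subsequence, $y_N\rightharpoonup\bar y$ weakly in $H$, hence $\mathrm{proj}_{H_N}x-y_N\rightharpoonup x-\bar y$ weakly since $\mathrm{proj}_{H_N}x\to x$ strongly. The convexity and continuity of $\phi_0$ imply its weak lower semi-continuity, giving $\liminf\phi_0(y_N)\geq\phi_0(\bar y)$, and the same reasoning applies to the convex continuous functional $z\mapsto\langle A^{-1}z,z\rangle$. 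Combining these inequalities yields
\[
\liminf_N\hat\psi^N(t,x)\geq\phi_0(\bar y)+\tfrac{1}{2t}\langle A^{-1}(x-\bar y),x-\bar y\rangle\geq\psi(t,x),
\]
which concludes the argument.

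The main technical point, echoing Proposition \ref{prop:psi0}, is that only weak compactness is available for the minimizing sequence $(y_N)$, so the lower bound crucially exploits weak lower semi-continuity of both $\phi_0$ and of the quadratic form $\langle A^{-1}\cdot,\cdot\rangle$. This is why both the convexity of $\phi_0$ from Hypothesis \ref{hypphi0} and the positivity of $A^{-1}$ enter in an essential way; the upper bound, by contrast, is robust and only uses strong convergence of the coordinate projections.
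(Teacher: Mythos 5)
Your proof is correct, and the first convergence together with the upper bound $\limsup_N\hat\psi^N(t,x)\leq\psi(t,x)$ (testing with the projection of the minimizer $y^\star$) coincide with the paper's argument. Where you genuinely diverge is the lower bound: the paper avoids any compactness argument by noting that every competitor $y\in H_N$ for $\psi^N(t,x^N)$ is also a competitor for $\psi(t,x)$ once lifted by zero, and that the quadratic form splits along the eigenbasis, which gives the explicit inequality
\begin{equation*}
\psi(t,x) \leq \psi^N(t,x^N) + \frac{1}{2t}\sum_{i > N}\lambda_i^{-1}x_i^2,
\end{equation*}
whose tail vanishes as $N\to\infty$. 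This is shorter and quantitative (it yields a rate controlled by the tail of $x$), and it uses neither the convexity of $\phi_0$ nor its coercivity for this half of the statement. Your route instead extracts minimizers $y_N$, uses coercivity to get boundedness, weak compactness, and weak lower semi-continuity of $\phi_0$ and of $z\mapsto\langle A^{-1}z,z\rangle$, in the spirit of Proposition \ref{prop:psi0}; it is heavier and requires the convexity hypothesis, but it is more robust in that it does not rest on the exact orthogonal splitting of the quadratic term, and it would survive perturbations of the finite-dimensional approximation. One minor point of rigor: to conclude for the full sequence you should first pass to a subsequence realizing $\liminf_N\hat\psi^N(t,x)$ before extracting the weakly convergent subsequence of $(y_N)$, and justify $\liminf(a_N+b_N)\geq\liminf a_N+\liminf b_N$ using that both terms are bounded below; these are standard fixes and do not affect the validity of your argument.
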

\begin{proof}
Since $\phi_0$ is continuous, the convergence for $\phi_0^N$ is immediate. Now, take $t > 0, x \in H$. For $N > 0$, denote by $x^N$ the orthogonal projection of $x$ onto $H_N$. Denote by $y$ the unique element of $H$ such that
\be
\psi(t,x) = \phi_0(y) + \frac{\langle A^{-1}(x-y),x-y\rangle}{2t}.
\ee
Denote by $y^N$ the orthogonal projection of $y$ onto $H_N$. By construction, for all $N> 0$,
\be
\psi^N(t,x^N) \leq \phi_0((y^N,0)) + \frac{\langle A^{-1}(x^N-y^N),x^N-y^N\rangle}{2t} \underset{N \to \infty}{\longrightarrow} \psi(t,x).
\ee
On the other hand, still by construction, 
\be
\psi(t,x) \leq \psi^N(t,x^N) + \frac{1}{2t}\sum_{i > N}\lambda_i^{-1}(x_i)^2.
\ee
Hence, we obtain that $\psi^N(t,x^N) \to \psi(t,x)$ as $N \to \infty$.

\end{proof}

Therefore, to obtain results on the limit equation \eqref{hjb}, we can pass to the limit $N \to \infty$ if we are able to construct a suitable solution for any $N < \infty$. At a fixed $N \geq 1$, we can establish the following result on the finite dimensional approximation of \eqref{hjb}. \ca
\begin{Theorem}\label{propN}
Assume that $\phi_0$ satisfies Hypothesis \ref{hypphi0}. For all $N \geq 0$, let $\phi^N$ be the solution of \eqref{hjbN} with initial condition $\phi_0^N$ given by \eqref{initialN}. Then for all $N \geq 0$, $\phi^N$ satisfies, for some continuous $\omega : \R_+ \to \R_+$, depending only on $\phi_0$ such that $\omega(0)=0$, for all $x,\xi \in H_N, t > 0$,
\be \forall 0\leq i,j \leq N, |(\phi^N)_{ij}(t,x)| \leq \frac{1}{\sqrt{\lambda_i\lambda_j}t},\ee
\be\xi D^2\phi^N(t,x)\xi \leq t^{-1}\langle A^{-1}\xi,\xi\rangle ,\ee
 \be|\nabla \phi^N(t,x)| \leq \frac{\sqrt{2(\phi^N(x) - \inf_H \phi^N_0)}}{\sqrt{t}},\ee
\be \langle A \nabla \phi^N(t,x),\nabla \phi^N(t,x)\rangle \leq 4 \left( \frac{\phi^N(x)- \inf_{H_N}\phi^N_0}{t}\right)^2,\ee
\be \label{eq654}\psi^N(t,x) \leq \phi^N(t,x) \leq \psi^N(t,x) + \omega(t).\ee
Furthermore, if we define $v(t,Bx) = \phi^N(t,x)$ then $v$ is a solution of 
\begin{equation}
\partial_t v + \frac{1}{2} |\nabla v |^2 - \sum_{i = 0}^N \lambda_i^{-1} v_{ii} = 0, \text{ in } (0,T)\times H_N
\end{equation}

\end{Theorem}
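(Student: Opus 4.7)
The plan is to assemble the result directly from the finite-dimensional existence theory of Theorem \ref{thm:finite} and the a priori estimates of Propositions \ref{prop:estapriori} and \ref{prop:grad}, taking care that the modulus $\omega$ in \eqref{eq654} can be chosen uniformly in $N$. Since $\phi_0$ satisfies Hypothesis \ref{hypphi0}, the preceding Proposition ensures that $\phi_0^N$ is $\mathcal{C}^{1,1}$, convex, bounded from below and coercive on $H_N$, with $\mathcal{C}^{1,1}$ norm controlled by that of $\phi_0$. Theorem \ref{thm:finite} then furnishes a unique viscosity solution $\phi^N$ of \eqref{hjbN} satisfying the conclusions of Propositions \ref{prop:estapriori} and \ref{prop:grad}. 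Lifted trivially to $H$, $\phi^N$ lies in $\mathcal{B}$ since only finitely many coordinates are active, so all the differential manipulations of Section \ref{sec:apriori} are justified.

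The derivative estimates are then specializations of the existing a priori bounds. The Hessian estimate $\xi D^2\phi^N(t,x)\xi \leq t^{-1}\langle A^{-1}\xi,\xi\rangle$ is exactly the second half of Proposition \ref{prop:estapriori}; evaluating it on $\xi = e_i$ gives $(\phi^N)_{ii}(t,x) \leq (\lambda_i t)^{-1}$, and combining this with the Cauchy--Schwarz type inequality produced by positivity of the Hessian on two-dimensional subspaces (the Corollary following Proposition \ref{prop:estapriori}, in which one lets the parameters $\phi_{ii}^0$ tend to infinity) yields the off-diagonal bound $|(\phi^N)_{ij}(t,x)| \leq (\sqrt{\lambda_i\lambda_j}\,t)^{-1}$. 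The two gradient inequalities are simply Proposition \ref{prop:grad} applied to $\phi^N$.

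The comparison \eqref{eq654} is the main obstacle, as the modulus $\omega$ must be chosen independently of $N$. The lower bound $\psi^N \leq \phi^N$ follows from the Remark after Proposition \ref{prop:est0}: indeed $\psi^N$ is convex (being the infimal convolution of the convex $\phi_0^N$ with the quadratic $(2t)^{-1}\langle A^{-1}\cdot,\cdot\rangle$), and $\Delta\phi^N \geq 0$ by propagation of convexity, so a comparison principle against the deterministic equation applies. For the upper bound, I would invoke Remark \ref{Rem:l1}. The essential quantitative input is the refined pointwise estimate
\[
(\phi^N)_{ii}(t,x) \leq \min\bigl(\|\phi_0\|_{\mathcal{C}^{1,1}},\ (\lambda_i t)^{-1}\bigr),
\]
which is obtained by using the first half of Proposition \ref{prop:estapriori} with the uniform bound $\|\phi_0^N\|_{\mathcal{C}^{1,1}} \leq \|\phi_0\|_{\mathcal{C}^{1,1}}$ and taking the minimum with the second half. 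Summation over $i$ gives
\[
\|\Delta \phi^N(t,\cdot)\|_\infty \leq c(t):=\sum_{i\geq 0} \min\bigl(\|\phi_0\|_{\mathcal{C}^{1,1}},(\lambda_i t)^{-1}\bigr),
\]
and hypothesis \eqref{hypA} is precisely the integrability condition making $\omega(t) := \int_0^t c(s)\,ds$ finite, continuous, vanishing at $0$, and crucially independent of $N$. The technical assumption \eqref{hyp:o} required by Proposition \ref{prop:est0} is easy to check at fixed $N$: on the finite-dimensional space $H_N$ the functions $\phi^N(t,\cdot)$ and $\psi^N(t,\cdot)$ both approach the Lipschitz datum $\phi_0^N$ uniformly on compact time intervals, so their difference stays bounded while $\psi^N$ is coercive.

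Finally, the change-of-variable identity is a straightforward chain rule computation in the eigenbasis of $A$. Writing $y_i = \lambda_i^{-1/2} x_i$, one obtains $v_{y_i} = \lambda_i^{1/2}\phi^N_{x_i}$ and $v_{y_iy_i} = \lambda_i \phi^N_{x_ix_i}$, so $|\nabla_y v|^2 = \langle A\nabla_x \phi^N, \nabla_x \phi^N \rangle$ and $\sum_{i=0}^N \lambda_i^{-1} v_{y_iy_i} = \Delta_x \phi^N$, which transforms \eqref{hjbN} into the announced equation. The hard part is thus the uniformity of $\omega$ in $N$; everything else amounts to unpacking the a priori estimates already established.
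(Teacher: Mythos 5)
Your proof follows essentially the same route as the paper's: invoke Theorem \ref{thm:finite} for existence, read the derivative bounds off Propositions \ref{prop:estapriori} and \ref{prop:grad} (diagonal Hessian bound from testing the $A^{-1}$ estimate on $e_i$, off-diagonal from positivity of the Hessian), and obtain the $N$-independent modulus $\omega$ in \eqref{eq654} by summing the diagonal second-derivative bounds and integrating in time via Remark \ref{Rem:l1}. Your $\min(\|\phi_0\|_{\mathcal{C}^{1,1}},(\lambda_i t)^{-1})$ is within a factor of two of the paper's $C/(1+C\lambda_i t)$, and both sums are controlled by \eqref{hypA}; you correctly identify the uniformity of $\omega$ in $N$ as the crux, and that part of the argument is sound.

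One inaccuracy worth fixing: the claim that the lifted $\phi^N$ ``lies in $\mathcal{B}$ since only finitely many coordinates are active'' is not correct. Finite-dimensionality disposes of the summability and trace conditions in the definition of $\mathcal{B}$, but not of the $\mathcal{C}^4$ smoothness requirement, which can fail when the initial datum is only $\mathcal{C}^{1,1}$ (and the estimate of Proposition \ref{prop:estapriori} is used with $\phi_{ii}^0 = \|\phi_{ii}|_{t=0}\|_{\infty}$, i.e.\ up to $t=0$). The paper explicitly acknowledges that $\phi^N$ need not belong to $\mathcal{B}$ and repairs this by approximating $\phi_0^N$ with $\mathcal{C}^4$ convex functions and passing to the limit in the (stable) estimates; your argument needs the same regularization step before the differential manipulations of Section \ref{sec:apriori} are licensed.
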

\begin{proof}
\ca Since $\phi_0^N$ satisfies all the requirements we made on $\phi_0$ in the previous section, we want to say that all the computations we made in Propositions \ref{prop:estapriori} and \ref{prop:est0} are still valid. The only points we need to take care of are the fact that $\phi^N$ is not necessary in $\mathcal{B}$, mainly because it is not $\mathcal{C}^4$ in space, and we have to ensure that the hypotheses of Proposition \ref{prop:est0} hold.\ca

The first point can be treated easily by approximation of the initial condition with $\mathcal{C}^4$ functions that we do not present here as they are standard.

Concerning the second point, let us recall that since $\phi_0^N \in \mathcal{C}^{1,1}$, \ca$\Delta \phi^N$ is uniformly bounded, possibly by a constant depending on $N$. Hence, from classical comparison principles, we can bound $\|\psi^N-\phi^N\|_{\infty}$. Thus,\ca we can apply Proposition \ref{prop:est0} and  we know that $\psi^N \leq \phi^N \leq \psi^N + \omega^N(t)$ for some modulus of continuity $\omega^N$. \ca Thanks to Remark \ref{Rem:l1}, we are lead to remark that
\be
\omega^N(t) \leq \int_0^t \sum_{i =0}^N \frac{C}{1 + \lambda_i C s}ds,
\ee
where $C$ is constant which bounds the $\mathcal{C}^{1,1}$ norm of $\phi_0$. Hence, because \eqref{hypA} is satisfied, we deduce that

\be
\omega^N(t) \leq \omega(t):= \sum_{i \geq 0} \lambda_i^{-1}\log(1 + C \lambda_i t) \underset{t \to 0}{\longrightarrow} 0.
\ee
\end{proof} 
\ca
The previous result yields some compactness on the sequence $(\phi^N)_{N \geq 1}$ of solution of \eqref{hjbN}. We now explain how it is sufficient to consider limit points of this sequence. For this, let us first define for any $N \geq 1$, $\hat{\phi}^N(t,(x,x')) = \phi^N(t,x)$ for $(x,x')\in H$ and $x \in H_N$.
\begin{Prop}\label{prop:compact}
Under the assumptions of the previous result, extracting a subsequence if necessary, the sequence $(\hat{\phi}^N)_{N \geq 1}$ converges uniformly on all compact sets toward some function $\phi \in \mathcal{C}([0,\infty),\mathcal{C}(H))$, which satisfies $\phi|_{t = 0}= \phi_0$, together with for all $t > 0, x, \xi \in H$
\be \forall 0\leq i,j , |\phi_{ij}(t,x)| \leq \frac{1}{\sqrt{\lambda_i\lambda_j}t},\ee
\be\xi D^2\phi(t,x)\xi \leq t^{-1}\langle A^{-1}\xi,\xi\rangle ,\ee
 \be|\nabla \phi(t,x)| \leq \frac{\sqrt{2(\phi(x) - \inf_H \phi_0)}}{\sqrt{t}},\ee
\be \langle A \nabla \phi(t,x),\nabla \phi(t,x)\rangle \leq 4 \left( \frac{\phi(x)- \inf_{H}\phi_0}{t}\right)^2,\ee
\be\label{eq684} \psi(t,x) \leq \phi(t,x) \leq \psi(t,x) + \omega(t).\ee
where $\omega: \R_+\to \mathbb{R}_+$ is a non-decreasing continuous function such that $\omega(0) = 0$ which depends only on $\phi_0$. 
\end{Prop}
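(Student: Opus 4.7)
The plan is a diagonal Arzel\`a--Ascoli extraction, leveraging the uniform-in-$N$ estimates of Theorem \ref{propN} together with the fact that each $\hat\phi^N$ depends on only finitely many coordinates. The main obstacle will be the extraction itself, since balls in $H$ are not compact and standard Ascoli does not apply directly.

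First I would establish uniform local bounds and a spatial equi-Lipschitz estimate. On $[\tau,T]\times\overline{B_R}$, the sandwich \eqref{eq654} combined with the elementary Lax--Oleinik upper bound $\psi^N(t,y)\le \phi_0(0)+\langle A^{-1}y,y\rangle/(2t)$ (choosing $y^*=0$ in the infimum) and the lower bound $\psi^N\ge \inf_H\phi_0$ yields an $N$-independent bound on $|\hat\phi^N|$. Feeding this into the gradient estimate of Theorem \ref{propN} gives a spatial Lipschitz constant $L=L(\tau,T,R)$ independent of $N$. For equicontinuity in $t$, I would integrate \eqref{hjbN} in time: the Hamiltonian contribution is controlled by $\langle A\nabla\phi^N,\nabla\phi^N\rangle\le 4(\phi^N-\inf\phi_0)^2/\sigma^2$, which is uniformly bounded on the slab, while the nonnegative Laplacian term satisfies
$$0\le \int_s^t \Delta_N\phi^N(\sigma,\cdot)\,d\sigma\le \omega(t)-\omega(s),$$
by the pointwise bound $\phi^N_{ii}(\sigma,\cdot)\le C/(1+\lambda_iC\sigma)$ from Proposition \ref{prop:estapriori} and the universal modulus $\omega$ of Theorem \ref{propN}. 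Together,
$$|\hat\phi^N(t,x)-\hat\phi^N(s,x)|\le \omega(t)-\omega(s)+C(\tau,T,R)|t-s|,$$
uniformly in $N$ and in $x\in\overline{B_R}$.

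Next, I would perform a diagonal extraction on $S\times D$ for countable dense sets $S\subset(0,\infty)$ and $D\subset H$. The spatial equi-Lipschitz and temporal equicontinuity propagate pointwise convergence from $S\times D$ to all of $(0,\infty)\times H$. To upgrade to uniform convergence on $[\tau,T]\times K$ for a compact $K\subset H$, I would use total boundedness: a finite $\varepsilon/(2L)$-net reduces the claim to finitely many points, where convergence is already uniform. This is the step where infinite-dimensionality bites, and it is precisely the combination of the uniform spatial Lipschitz bound with total boundedness of compacta that makes Ascoli go through.

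Finally, I would pass to the limit in the estimates. The sandwich \eqref{eq684} follows from \eqref{eq654} and the pointwise convergence $\psi^N(t,P_N\cdot)\to\psi(t,\cdot)$ established earlier in the paper. Convexity of $\phi$ is inherited from $\hat\phi^N$; the Hessian bound and the bound on $\phi_{ij}$ transfer via second-order difference quotients tested on $\xi\in\bigcup_M H_M$ followed by density in $H$. The first-order bounds transfer at points of Fr\'echet differentiability of $\phi$, which are dense by convexity, and extend to all of $H$ by continuity of the right-hand sides. For the initial condition, \eqref{eq684} together with $\omega(t)\to 0$ and the uniform convergence $\psi(t,\cdot)\to\phi_0$ on compacta (Proposition \ref{prop:psi0} and Remark \ref{rem:uniformpsi}) gives $\phi(t,\cdot)\to\phi_0$ uniformly on compacta as $t\to 0^+$, so $\phi\in \mathcal C([0,\infty),\mathcal C(H))$ with $\phi|_{t=0}=\phi_0$.
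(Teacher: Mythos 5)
Your proposal is correct and follows essentially the same route as the paper: uniform spatial Lipschitz bounds from Theorem \ref{propN}, time equicontinuity extracted from the PDE, an Ascoli-type diagonal extraction on compact sets of $(0,\infty)\times H$, and passage to the limit in each estimate (with the initial condition recovered from the sandwich \eqref{eq684} and Proposition \ref{prop:psi0}). The only cosmetic difference is that the paper bounds $|\partial_t\hat\phi^N|$ pointwise by $t^{-1}\sum_i\lambda_i^{-1}$ plus the gradient term, whereas you integrate the Laplacian contribution in time to get the modulus $\omega(t)-\omega(s)$; both yield the same uniform time regularity, and your treatment of the limit of the derivative bounds via difference quotients is if anything more explicit than the paper's.
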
 
\begin{proof}
\ca By construction, for any $N \geq 1$, $\hat{\phi}^N$ is a solution of \eqref{hjb} with initial condition $\hat{\phi}^N_0$. We want to show that it converges in some sense toward a function $\phi : \mathbb{R}_+ \times H$. Recall that for all $N \geq 1$, $\hat{\phi}^N$ satisfies the required properties. The spatial regularity is somehow sufficient to gain compactness, given that we are able to obtain a uniform estimate on the time regularity of the sequence $(\hat{\phi}^N)_{N \geq 1}$.

The time regularity is obtained through the PDE satisfied by $\hat{\phi}^N$. Indeed, it holds that
\be\label{eq582}
|\Delta \hat{\phi}^N |+ \langle A \nabla \hat{\phi}^N,\nabla \hat{\phi}^N\rangle \leq \frac{\sum_i \lambda_i^{-1}}{t} + 4 \left( \frac{\phi^N(x)- \inf_{H_N}\phi^N_0}{t}\right)^2.
\ee
Hence, we obtain form the equation \eqref{hjbN} that the right hand side of the previous inequality bounds $|\partial_t \hat{\phi}^N|$. Thanks to the upper bound on $\hat{\phi}^N$ obtained through the value of the deterministic problem, we deduce some uniform in time regularity on all bounded sets of $(0,\infty)\times H$.\\

To make the previous heuristic more precise, let us consider an increasing sequence $(K_n)_{n\geq 0}$ of compact sets which covers $(0,\infty)\times H$. For any $n \geq 0$, the sequence $(\hat{\phi}^N)_{N \geq 0}$ is uniformly continuous on $K_n$ since it satisfies \eqref{eq582}. Uniform bounds hold from the inequality \eqref{eq654}. Hence, $(\hat{\phi}^N)_{N \geq 0}$ is uniformly continuous on each of the $K_n$. Hence, by a standard diagonal extraction, we can consider a sequence $(\hat{\phi}^k)_{k \geq 0}$ which converges uniformly on each of the $K_n$ toward a function $\phi$. The fact that the convergence is indeed uniform on all compact sets can be checked afterwards. It remains to show that $\phi$ satisfies the requirements of the result.\\

The fact that $\phi$ satisfies the regularity requirements of the problem is obtained simply by passing to the limit $k \to \infty$ on any of the compact sets $K_n$. Moreover, since for all $t> 0, x \in H$, \eqref{eq654} holds, we deduce by passing to the (pointwise) limit that \eqref{eq684} holds.
\end{proof}
\ca
\subsection{Characterization of the limit} It now remains to show in which sense the function $\phi$ given by the previous Proposition is a solution of \eqref{hjb}. The two main ingredients we use in our formulation of this fact are the notion of viscosity solution and the change of variable that we mentioned above. Namely, taking $\phi$ a function given by Proposition \ref{prop:compact}, we define a function 
\be
v(t,Bx) = \phi(t,x),
\ee
for $t > 0$ and $x \in H$. Recall that $B = A^{-\frac 12}$. A priori, $v$ is only defined on $(0,\infty)\times B(H)$, but we can prove the following result.
\begin{Lemma}\label{lemma:first}
The previous function $v$ can be extended continuously to a function $v: (0,\infty)\times H \to \R$.
\end{Lemma}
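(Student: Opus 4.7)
The key point is that, on $B(H)$, differentials of $v$ are computable from those of $\phi$ via the change of variable. Since $B$ is symmetric, the chain rule gives
\[
\nabla_y v(t,y) = B^{-1}\nabla_x \phi(t,x), \qquad D^2_y v(t,y) = B^{-1}\, D^2_x\phi(t,x)\, B^{-1},
\]
for $y = Bx \in B(H)$. In particular
\[
|\nabla_y v(t,y)|^2 = \langle A\nabla_x\phi(t,x),\nabla_x\phi(t,x)\rangle,
\]
and the right-hand side is bounded by $4\bigl((\phi(t,x)-\inf_H\phi_0)/t\bigr)^2$ thanks to Proposition~\ref{prop:compact}.

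The crux is to turn this bound, which is \emph{a priori} expressed in terms of $|x|$, into a bound controlled by $|y|$ (since $|x|=|B^{-1}y|$ can be enormous even for bounded $y$). For this I would combine two facts from Proposition~\ref{prop:compact}: the upper bound $\phi(t,x)\leq\psi(t,x)+\omega(t)$, and the elementary estimate $\psi(t,x)\leq \phi_0(0)+\langle A^{-1}x,x\rangle/(2t)$ obtained by taking $y=0$ in the Lax–Oleinik formula. Since $\langle A^{-1}x,x\rangle = |A^{-1/2}x|^2 = |Bx|^2 = |y|^2$, one gets
\[
\phi(t,x)-\inf_H\phi_0 \leq \bigl(\phi_0(0)-\inf_H\phi_0\bigr) + \frac{|y|^2}{2t} + \omega(t).
\]
Inserting this into the gradient bound yields a Lipschitz estimate for $v(t,\cdot)$ on $B(H)\cap\{|y|\leq R\}$ whose constant depends only on $t$ and $R$. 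This is the main obstacle; once resolved, the density argument is straightforward.

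To upgrade this to joint $(t,y)$-continuity, I would control $\partial_t v$ through the PDE: $\partial_t v(t,y) = \partial_t\phi(t,x) = \Delta\phi(t,x) - \tfrac{1}{2}\langle A\nabla\phi,\nabla\phi\rangle$. The quadratic term has just been estimated; for the Laplacian, convexity of $\phi$ gives $0\leq\phi_{ii}\leq \lambda_i^{-1}/t$ from the Hessian bound $\xi D^2\phi(t,x)\xi\leq t^{-1}\langle A^{-1}\xi,\xi\rangle$, so
\[
|\Delta\phi(t,x)| \leq \frac{1}{t}\sum_{i\geq 0}\lambda_i^{-1},
\]
and the last sum is finite because hypothesis \eqref{hypA} is strictly stronger than $\sum_i\lambda_i^{-1}<\infty$. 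Hence $|\partial_t v|$ is also locally bounded on $(0,\infty)\times H$ by a constant depending only on $t$ and $|y|$.

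Combining the two derivative estimates, $v$ is uniformly continuous on each cylinder $[\tau,T]\times\bigl(B(H)\cap\overline{B_R}\bigr)$ for $0<\tau<T$ and $R>0$, with respect to the topology inherited from $(0,\infty)\times H$. Because $B = A^{-1/2}$ is injective and self-adjoint, its range equals $\operatorname{Dom}(A^{1/2})$ and is therefore dense in $H$; thus the closure of $B(H)\cap\overline{B_R}$ in the $H$-norm is $\overline{B_R}$. By uniform continuity, $v$ extends uniquely and continuously to $[\tau,T]\times\overline{B_R}$, and gluing these extensions as $\tau\downarrow 0$, $T\uparrow\infty$, $R\uparrow\infty$ produces the required continuous function $v:(0,\infty)\times H\to\mathbb{R}$.
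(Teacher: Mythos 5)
Your argument is correct and its spatial half coincides with the paper's: both rest on the identity $|\nabla_y v(t,Bx)|^2=\langle A\nabla_x\phi,\nabla_x\phi\rangle$, the gradient estimate of Proposition \ref{prop:grad}, and the conversion of the bound from the $x$-scale to the $y$-scale via $\phi\leq\psi+\omega(t)$ and $\psi(t,x)\leq\phi_0(0)+\langle A^{-1}x,x\rangle/(2t)=\phi_0(0)+|Bx|^2/(2t)$ (your version with $A^{-1}$ is the right one; the display \eqref{boundeter} in the paper has a typo). Where you genuinely diverge is the time-derivative bound. The paper routes this through Lemma \ref{lemma:second}, whose proof restarts the deterministic problem at time $t_0$ and differentiates the Lax--Oleinik value in $s$, obtaining $|\partial_t\phi(t_0,x)|\leq C\sqrt{\phi(t_0,x)}+c$ and hence a bound quadratic in $|Bx|$. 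You instead read $\partial_t\phi$ off the equation, controlling $\Delta\phi$ by $0\leq\phi_{ii}\leq\lambda_i^{-1}/t$ (convexity plus the Hessian estimate) together with $\sum_i\lambda_i^{-1}<\infty$, which indeed follows from \eqref{hypA} since $\log(1+\lambda_i)\geq\log(1+\lambda_0)>0$; this is shorter and is in fact the same device the paper uses in \eqref{eq582} to get time-compactness in Proposition \ref{prop:compact}, at the price of a slightly worse (quartic in $|y|$) growth, which is harmless here. One caveat: the pointwise identity $\partial_t\phi=\Delta\phi-\frac12\langle A\nabla\phi,\nabla\phi\rangle$ is only established for the classical finite-dimensional solutions $\phi^N$, not for the limit $\phi$, so you should run your estimate on $v^N(t,Bx)=\phi^N(t,x)$ (where all bounds are uniform in $N$) and pass to the limit, exactly as the paper does in the proofs of Lemma \ref{lemma:second} and Proposition \ref{prop:compact}. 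Your final density/uniform-continuity extension step, using that $B(H)=\mathrm{Dom}(A^{1/2})$ is a dense subspace so that $B(H)\cap\overline{B_R}$ is dense in $\overline{B_R}$, is correct and is the step the paper leaves implicit in the sentence ``Hence the result follows.''
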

This Lemma is in fact a consequence of the following result.
\begin{Lemma}\label{lemma:second}
There exists a non-increasing non-negative function $\omega: (0,\infty)\to \R_+$ such that for any $t > 0, x \in H$ such that
\be
|\partial_t \phi(t,x) | \leq \omega(t)(|Bx|^2+1).
\ee
\end{Lemma}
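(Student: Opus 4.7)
\medskip

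The plan is to prove the estimate first at the level of the finite dimensional approximations $\phi^N$, where $\partial_t\phi^N$ is classically defined by \eqref{hjbN}, and then invoke the uniform convergence of Proposition \ref{prop:compact} to transfer the bound to $\phi$. Since $\phi^N$ is convex (Theorem \ref{propN}) and $A$ is positive, both $\Delta_N\phi^N$ and $\langle A\nabla\phi^N,\nabla\phi^N\rangle$ are non-negative, so from the equation
$$\partial_t\phi^N = \Delta_N\phi^N - \tfrac{1}{2}\langle A\nabla\phi^N,\nabla\phi^N\rangle$$
one gets $|\partial_t\phi^N|\leq \Delta_N\phi^N + \tfrac12\langle A\nabla\phi^N,\nabla\phi^N\rangle$, and it suffices to control each summand.

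The Laplacian is handled by applying the Hessian estimate $\xi^T D^2\phi^N \xi \leq t^{-1}\langle A^{-1}\xi,\xi\rangle$ from Theorem \ref{propN} with $\xi=e_i$, which gives $\phi^N_{ii}\leq (\lambda_i t)^{-1}$; summing over $0\leq i\leq N$ yields $0\leq \Delta_N\phi^N(t,x)\leq \operatorname{Tr}(A^{-1})/t$, a finite quantity thanks to \eqref{hypA}. The gradient term is the main point: the bound $\langle A\nabla\phi^N,\nabla\phi^N\rangle\leq 4((\phi^N-\inf\phi_0)/t)^2$ of Proposition \ref{prop:grad} is too crude because it is quadratic in $(\phi-\inf\phi_0)$, which would produce a spurious $|Bx|^4$ contribution. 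To improve it, I apply the change of variable $v^N(t,y):=\phi^N(t,A^{1/2}y)$ on $H_N$; a direct computation shows that $v^N$ is convex with the \emph{uniform} (in $y$) Hessian bound $D^2_y v^N(t,y)\leq t^{-1}I$. A second order Taylor expansion then gives
$$v^N(t,y-t\nabla v^N(t,y)) \leq v^N(t,y) - \tfrac{t}{2}|\nabla v^N(t,y)|^2,$$
and comparing the left-hand side with $\inf \phi_0 \leq v^N$ yields
$$|\nabla_y v^N(t,y)|^2 \leq \tfrac{2}{t}\bigl(v^N(t,y)-\inf \phi_0\bigr),$$
which, reading back in the $x$ variable (note that $|\nabla_y v^N|^2=\langle A\nabla_x\phi^N,\nabla_x\phi^N\rangle$), becomes
$$\langle A\nabla\phi^N(t,x),\nabla\phi^N(t,x)\rangle \leq \tfrac{2}{t}\bigl(\phi^N(t,x)-\inf\phi_0\bigr).$$

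It remains to bound $\phi^N(t,x)-\inf\phi_0$. From \eqref{eq654} and \eqref{boundeter} (choosing $y=0$ in the Lax--Oleinik formula),
$$\phi^N(t,x) - \inf\phi_0 \leq \psi^N(t,x)+\omega(t)-\inf\phi_0 \leq \tfrac{|Bx|^2}{2t} + C_0 + \omega(t), \qquad C_0 := \phi_0(0)-\inf\phi_0.$$
Putting the pieces together,
$$|\partial_t\phi^N(t,x)| \leq \frac{\operatorname{Tr}(A^{-1})}{t} + \frac{|Bx|^2}{2t^2} + \frac{C_0+\omega(t)}{t} \leq \tilde\omega(t)\bigl(|Bx|^2+1\bigr),$$
with $\tilde\omega(t):=\max\bigl(\tfrac{1}{2t^2},\tfrac{\operatorname{Tr}(A^{-1})+C_0+\omega(t)}{t}\bigr)$. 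This bound is uniform in $N$, hence passes to the limit using Proposition \ref{prop:compact}: the sequence $\hat\phi^N$ is equi-Lipschitz in $t$ on each compact set, and since it converges to $\phi$ uniformly on compacts, the same estimate holds for $\partial_t\phi$ (a.e. in $t$, say). The resulting bound is taken in the form $\omega(t):=\sup_{s\geq t}\tilde\omega(s)$, which is non-increasing and finite on $(0,\infty)$ since $\tilde\omega(s)\to 0$ as $s\to\infty$.

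The only real obstacle is the derivation of the \emph{linear}-in-$(\phi-\inf\phi_0)$ control of $\langle A\nabla\phi,\nabla\phi\rangle$; this is exactly what the change of variable $\phi\mapsto v$ provides, by turning the weighted Hessian estimate $D^2\phi\leq A^{-1}/t$ into the flat bound $D^2v\leq I/t$ to which the classical convex-function inequality applies cleanly. Everything else is a compilation of the estimates already proved.
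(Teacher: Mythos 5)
Your proof is correct, but it takes a genuinely different route from the paper's. The paper never differentiates the equation in time directly at $t_0$: it restarts the deterministic comparison of Proposition \ref{prop:est0} at time $t_0$, so that $\partial_t\phi^N(t_0,\cdot)$ is controlled, up to the constant coming from the uniform Laplacian bound, by $\partial_s g(0^+,\cdot)$, where $g$ is the Lax--Oleinik semigroup started from $\phi^N(t_0,\cdot)$; the envelope formula then gives $\partial_s g(s,x)=-\langle A\nabla_x\phi^N(t_0,y(s,x)),\nabla_x\phi^N(t_0,y(s,x))\rangle$, which is bounded using the gradient estimates of Proposition \ref{prop:grad} together with the monotonicity $\phi^N(t_0,y(s,x))\le\phi^N(t_0,x)$ of the minimizer. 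You instead read the bound directly off \eqref{hjbN}, using convexity and positivity of $A$ to split $|\partial_t\phi^N|$ into the trace term, controlled by $\mathrm{Tr}(A^{-1})/t$ via the Hessian bound (finite thanks to \eqref{hypA}), and the gradient term. The key ingredient you supply is the sharpened estimate $\langle A\nabla\phi^N,\nabla\phi^N\rangle\le \frac{2}{t}\left(\phi^N-\inf_H\phi_0\right)$, obtained by transporting the weighted Hessian bound $D^2\phi^N\le t^{-1}A^{-1}$ into the flat bound $D^2 v^N\le t^{-1}I$ and applying the classical convex-function inequality; this is correct, and it is linear rather than quadratic in $\phi-\inf\phi_0$, which, as you rightly note, is exactly what is needed to avoid a spurious $|Bx|^4$ (the third estimate as literally stated in Proposition \ref{prop:grad} would not suffice, and your linear bound is also what makes the paper's own step ``$|\partial_s g|\le C\sqrt{\phi^N(t_0,\cdot)}$'' legitimate). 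Your route is more self-contained, bypassing the deterministic semigroup entirely; the paper's route localizes the gradient estimate at the minimizer $y(s,x)$, which is what lets it get away with a cruder pointwise gradient bound. The one loose end, which the paper shares, is that the limit $\phi$ is a priori only locally Lipschitz in $t$ with the stated constant, so the conclusion should be read as a bound on time difference quotients (or a.e.\ in $t$); since Lemma \ref{lemma:first} only uses the Lipschitz bound, nothing is lost.
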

\begin{proof}
We show that such an estimate is valid for any $\phi^N$ for $N \geq 1$ in the construction of $\phi$. Take $t_0> 0$. Since $\Delta \phi^N(t_0,\cdot)$ is a bounded function, uniformly in $N$, we deduce from Proposition \ref{prop:est0} that there exists $c > 0$ such that 
\be
g(t-t_0,x)\leq \phi^N(t,x) \leq g(t-t_0,x) + c(t-t_0) \text{ in } [t_0,\infty) \times H,
\ee
where $g(s,x)$ is defined by
\be
g(s,x) := \inf_{y \in H} \bigg\{\phi^N(t_0,y) + \frac{ \langle A^{-1}(x-y), x-y\rangle\rangle}{2s}\bigg\}.
\ee
Hence, an estimate on $\partial_t \phi^N(t_0,\cdot)$ reduces to proving that we have an estimate on $\partial_s g(0,\cdot)$, or in other words, on the time regularity of the deterministic problem given that the initial condition is well behaved enough. 

The minimization problem involved in the definition of $g$ admits a unique minimizer which we denote by $y(s,x)$ ($\phi^N(t_0,\cdot)$ is indeed a convex, coercive and continuous function). By construction, at this point of minimum, the following holds
\be
\nabla_x\phi^N(t_0,y(s,x)) = \frac 1 s A^{-1}(x-y(s,x)).
\ee
Moreover, we can compute the time derivative of $g$ to remark that it is given for $s > 0$ by
\be
\partial_sg(s,x) = -\frac{1}{s^2}\langle A^{-1}(x-y(s,x)),x-y(s,x)\rangle.
\ee
Hence, by definition of $y(s,x)$ we obtain that
\be
\partial_s g(s,x) = -\langle A \nabla_x \phi^N(t_0,y(s,x)),\nabla_x \phi^N(t_0,y(s,x))\rangle. 
\ee
Using the estimate on the gradient in Proposition \ref{prop:grad}, we deduce that there exists $C$ depending only on $t_0$ such that
\be
|\partial_s g(s,x) | \leq C \sqrt{\phi^N(t_0,y(s,x))} \leq C \sqrt{\phi^N(t_0,x)},
\ee
where the last inequality holds because $y(s,x)$ is constructed as an infimum. Since this bound does not depend on $s$, we deduce that it is also valid for $s=0$. Hence we finally obtain that
\be
|\partial_t \phi^N(t_0,x) | \leq C \sqrt{\phi^N(t_0,x)} + c,
\ee
from which the result easily follows recalling the estimate involving $\psi^N$ as well as the bound \eqref{boundeter}.

\end{proof}

\begin{proof}\textit{(Of Lemma \ref{lemma:first}.)}
From Lemma \ref{lemma:second}, we obtain that for $v: (0,\infty) \times H \to \R$, $\partial_tv$ is bounded by a constant which depends only on $|x|$. Recalling Proposition \ref{prop:grad}, we deduce that we have the same bound on $|\nabla_x v|^2$ since we have the relation
\be
|\nabla_x v(t,Bx)|^2 = \langle A \nabla_x \phi(t,x),\nabla_x \phi(t,x)\rangle.
\ee
Hence the result follows.
\end{proof}

Now that we have justified the existence of the function $v$, we can formulate in which sense it is a viscosity solution of 
\be\label{hjbv2}
\partial_t v + |\nabla_x v|^2 - Tr(B^2D^2v) = 0 \text{ in } (0,\infty)\times H.
\ee
In order to do so, we use finite dimensional approximation of this equation. This choice is particularly justified because of the regularity estimates available on $v$. Indeed, from Proposition \ref{prop:est0}, we obtain that for any $t> 0, x \in H$,
\be
0 \leq v_{ii}(t,x) \leq t^{-1}.
\ee
Hence, we deduce that there exists $(\epsilon_d)_{d \geq 0}$ such that $\epsilon_d \to 0$ as $d \to \infty$ and
\be\label{epsilond}
0 \leq \sum_{i > d}\lambda_{i}^{-1}v_{ii}(t,x) \leq t^{-1}\epsilon_d.
\ee
In particular, we have a good control on the rest of the second order terms if we decide to look only at the derivatives in directions $i \leq d$ in \eqref{hjbv2}. We still have to be careful with the terms involving the rest of the first order derivatives, in particular we are going to treat them differently to express the viscosity sub-solution property and a viscosity super-solution property. We can establish the following result.

\begin{Prop}\label{prop:super}
For any $d \geq 1$ and $\delta > 0$, define $v^d : (0,\infty)\times H_d \to \R$ by $v^d(t,x) = \inf_{x' \in H_d^{\perp}}\{v(t,(x,x')) + \delta |x'|^2\}.$ Then for any $(t_*,x_*) \in (0,\infty) \times H_d$ point of local minimum of $v^d- \varphi$, where $\varphi$ is a $\mathcal{C}^2$ function on $[0,\infty)\times H_d$, the following holds
\be
\partial_t \varphi(t_*,x_*) + |\nabla_x \varphi(t_*,x_*)|^2 - \sum_{i = 0}^d \lambda_i^{-1}\varphi_{ii}(t_*,x_*) \geq  - \delta^2|x_*'|^2,
\ee
where $x_*'$ is the point of minimum in $\inf_{x' \in H_d^{\perp}}\{v(t_*,(x_*,x')) + \delta |x'|^2\}.$
\end{Prop}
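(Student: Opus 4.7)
\noindent\textit{Proof proposal.} The plan is to invoke the finite-dimensional approximations $v^N(t,z):=\phi^N(t,A^{1/2}z)$ from Theorem \ref{propN}: these are $C^2$ in space (after the standard smoothing of $\phi_0^N$), convex in $z$ (since $\phi^N$ is), converge to $v$ locally uniformly by Proposition \ref{prop:compact}, and classically satisfy $\partial_t v^N + \tfrac12 |\nabla v^N|^2 - \sum_{i=0}^N \lambda_i^{-1} v^N_{ii} = 0$ on $(0,\infty)\times H_N$. The first observation is that the hypothesis on $v^d$ translates into a local minimality statement for $v$ itself: setting $\Phi(t,x,x'):=\varphi(t,x)-\delta|x'|^2$ on $(0,\infty)\times H$, the chain
\[
v(t,(x,x'))-\Phi(t,x,x') \,\geq\, v^d(t,x)-\varphi(t,x) \,\geq\, v^d(t_*,x_*)-\varphi(t_*,x_*) \,=\, v(t_*,(x_*,x'_*))-\Phi(t_*,x_*,x'_*),
\]
valid for $(t,x)$ near $(t_*,x_*)$ and any $x'\in H_d^{\perp}$, shows that $(t_*,(x_*,x'_*))$ is a local minimum of $v-\Phi$ on $(0,\infty)\times H$. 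I would render it strict by adding a quadratic penalization $\eta(|t-t_*|^2+\|(x,x')-(x_*,x'_*)\|^2)$ with $\eta>0$ to be sent to $0$ at the end.

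The second step, which I anticipate to be the most delicate, is to produce convergent approximating minima. For each $N\geq d$ I would minimize $v^N-\Phi+\text{penalty}$ over a closed ball of fixed radius around $(t_*,x_*,P_N x'_*)$ inside the finite-dimensional slice $\mathbb{R}\times H_d\times(H_N\cap H_d^{\perp})$, where $P_N$ is the orthogonal projection onto $H_N\cap H_d^{\perp}$. Since this slice is finite-dimensional, a minimizer $(t_N,(x_N,x'_N))$ exists; the strictness of the limiting minimum together with the local uniform convergence $v^N\to v$ then force $(t_N,(x_N,x'_N))\to(t_*,(x_*,x'_*))$, with the minimizer lying in the interior of the ball for $N$ large.

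The third step exploits the PDE satisfied by $v^N$ at these interior $C^2$ minima. The optimality conditions yield $\partial_t v^N=\partial_t\varphi+O(\eta)$, $v^N_i=\varphi_i+O(\eta)$ for $i\leq d$, $v^N_i=-2\delta(x'_N)_i+O(\eta)$ for $d<i\leq N$, and $D^2 v^N \succeq D^2\Phi - O(\eta)$; combined with convexity of $v^N$ (so $v^N_{ii}\geq 0$ for all $i$), this gives $\sum_{i=0}^N \lambda_i^{-1} v^N_{ii} \geq \sum_{i=0}^d \lambda_i^{-1}\varphi_{ii}-O(\eta)$. Substituting into the PDE and rearranging,
\[
\partial_t\varphi(t_N,x_N)+\tfrac12|\nabla_x\varphi(t_N,x_N)|^2+2\delta^2|x'_N|^2-\sum_{i=0}^d\lambda_i^{-1}\varphi_{ii}(t_N,x_N)\,\geq\, O(\eta),
\]
and sending $N\to\infty$ (so $x'_N\to x'_*$ strongly, by finite-dimensional compactness and $P_N x'_*\to x'_*$), then $\eta\to 0$, yields the stated inequality.

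The main obstacle, as indicated, is the passage through the second step: in an infinite-dimensional ambient space, local minima are not automatically preserved under uniform convergence, and it is essential to combine the strict perturbed minimality with the compactness of the finite-dimensional slice $\mathbb{R}\times H_d\times(H_N\cap H_d^{\perp})$; without this reduction, the ball around $(t_*,x_*,x'_*)$ would be non-compact and one could not extract the convergent minimizing sequence. The tail bound \eqref{epsilond} on $\sum_{i>d}\lambda_i^{-1}v^N_{ii}$ and the convexity of $v^N$ are precisely what allow the contribution of directions with $i>d$ to be discarded (by a one-sided sign) when passing to the limit.
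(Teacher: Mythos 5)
Your proposal is correct and follows essentially the same route as the paper: approximate by the classical finite-dimensional solutions $v^N$, minimize the $\delta$-penalized functional jointly in $(t,x,x')$, evaluate the PDE at the interior minimizer using the convexity of $v^N$ to discard the tail of the Laplacian with the right sign, and pass to the limit $N\to\infty$. The one point to tighten is the strong convergence $x'_N\to x'_*$: since $\dim\left(H_N\cap H_d^{\perp}\right)\to\infty$, ``finite-dimensional compactness'' of the slice does not by itself yield it, and the paper instead deduces it from the uniform $\delta$-strong convexity in $x'$ of $x'\mapsto v^N(t,(x,x'))+\delta|x'|^2$ (a quantitative substitute for strictness which your $\eta$-penalization only provides in the $(t,x)$-variables).
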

\begin{proof}
Take $d \geq 1, \delta > 0$, and $\varphi \in C^2(\R_+\times H_d)$. Consider $(t_*,x_*)$ a point of local minimum of $v^d - \varphi$. Without loss of generality we assume that this is a point of strict minimum. Take the sequence $(\phi^N)_{N \geq 0}$ considered in the proof of Proposition \ref{prop:compact}. Define the sequence $(v^N)_{N \geq 0}$ by
\be
v^N(t,Bx) = \phi^N(t,x).
\ee
For any $N \geq 1$, we extend $v^N$ to $H$ by setting $v^N(t,(x,x')) = v^N(t,x)$. Consider now $v^{N,d}$ defined by
\be
v^{N,d}(t,x) = \inf_{x' \in H_d^{\perp}}\{v^N(t,(x,x')) + \delta |x'|^2\}.
\ee
Denote by $(t_N,x_N,x'_N)$ the point of minimum of 
\be\label{eq:688}
(t,x,x') \to v^N(t,(x,x')) + \delta |x'|^2 - \varphi(t,x).
\ee
Remark that, for $N> d$, $x' \in H_d^{\perp}$ has only $N-d$ non zero coordinates. Since $\phi^N$ is a viscosity solution of \eqref{hjbN}, we obtain that
\be
\partial_t \varphi(t_N,x_N) + |\nabla_x \varphi(t_N,x_N)|^2 - \sum_{i = 0}^d \lambda_i^{-1}\varphi_{ii}(t_N,x_N) \geq  - \delta^2|x_N'|^2.
\ee
The result is proved if we can show that $(t_N,x_N,x'_N) \to (t_*,x_*,x'_*)$ as $N \to \infty$. In order to show that this property holds, we first gather some facts about the sequence $(v^N)_{N \geq 0}$. First, for any $t > 0$, $v^N(t,\cdot)$ is convex in $(x,x')$. Secondly, from the same arguments we developed in the proof of Lemma \ref{lemma:second}, we know that the sequence is locally Lipschitz continuous in $(t,x)$, uniformly in $N$. Hence, on all compact sets, it converges uniformly toward $v$.

From this we deduce that $(t_N,x_N) \to (t_*,x_*)$ as $N \to \infty$. The convergence of $(x'_N)_{N \geq 0}$ is a bit more subtle since it has to hold in an infinite dimensional space. The main argument to establish this relies on the addition of the term in $\delta$. Indeed, the sequence of function defined in \eqref{eq:688} is uniformly strongly convex in $x'$. Thus, the points of minimum in $x'$ converge toward $x'_N$, which proves the claim.
\end{proof}

\begin{Rem}
The previous result states that $v^d$, the finite dimensional projection of $v$, is almost a viscosity super-solution of the finite dimensional equation HJB equation. We now establish a similar result for the viscosity sub-solution property, which is simpler to state.
\end{Rem}

\begin{Prop}\label{prop:sub}
Take $d > 1$, and $\varphi \in \mathcal{C}^2(\R_+\times H_d)$. Define $v_d:(0,\infty)\times H_d \to \R$ by $v_d(t,x) = v(t,(x,0))$. Consider $(t_*,x_*) \in (0,\infty)\times H_d$, a point of local maximum of $v_d - \varphi$. Then the following holds
\be
\partial_t \varphi(t_*,x_*) + |\nabla_x \varphi(t_*,x_*)|^2 - \sum_{i = 0}^d \lambda_i^{-1}\varphi_{ii}(t_*,x_*) \leq \frac{\epsilon_d}{t_*}
\ee
\end{Prop}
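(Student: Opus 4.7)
The plan is to follow the same viscosity--through--approximation strategy as in Proposition \ref{prop:super}, but in a simpler form: at a local \emph{maximum} the uncontrolled first-order tail $\sum_{i>d}v_i^2$ has the favourable sign and can simply be discarded, so no inf-convolution in the extra variables is needed.

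First, by replacing $\varphi$ with $\varphi(t,x)+|t-t_*|^2+|x-x_*|^2$, I reduce to the case where $(t_*,x_*)$ is a strict local maximum of $v_d-\varphi$. Let $v^N(t,Bx)=\phi^N(t,x)$ be built from the sequence of finite-dimensional solutions of Proposition \ref{prop:compact}, extended constantly in the directions of $H_N^\perp$, and set $v^N_d(t,x):=v^N(t,(x,0))$. As noticed in the proof of Proposition \ref{prop:super}, $(v^N)$ is locally Lipschitz uniformly in $N$ and converges uniformly on compact sets to $v$, so $v^N_d\to v_d$ uniformly on compact sets of $(0,\infty)\times H_d$. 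A standard extraction then yields points $(t_N,x_N)\to(t_*,x_*)$ which are local maxima of $v^N_d-\varphi$.

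The heart of the argument is to exploit the PDE satisfied pointwise by $v^N$ at the point $(t_N,(x_N,0))$. Because $\phi^N$ is a classical solution of \eqref{hjbN} (Theorem \ref{propN}), $v^N$ solves, classically,
\be
\partial_t v^N + |\nabla v^N|^2 - \sum_{i=0}^{N}\lambda_i^{-1}v^N_{ii}=0.
\ee
Splitting both sums into indices $i\leq d$ and $i>d$, and discarding the non-negative term $\sum_{d<i\leq N}(v^N_i)^2$, one obtains
\be
\partial_t v^N_d(t_N,x_N)+|\nabla_x v^N_d(t_N,x_N)|^2 -\sum_{i=0}^{d}\lambda_i^{-1}(v^N_d)_{ii}(t_N,x_N)\leq \sum_{i=d+1}^{N}\lambda_i^{-1}v^N_{ii}(t_N,(x_N,0)).
\ee
By Theorem \ref{propN}, $\phi^N_{ii}\leq 1/(\lambda_i t)$, which via the change of variable translates into $0\leq v^N_{ii}(t,\cdot)\leq 1/t$, so the right-hand side is at most $\epsilon_d/t_N$ with $\epsilon_d=\sum_{i>d}\lambda_i^{-1}$, consistently with \eqref{epsilond}. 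At the local maximum $(t_N,x_N)$ of $v^N_d-\varphi$, the tangency conditions $\partial_t v^N_d=\partial_t\varphi$, $\nabla_x v^N_d=\nabla_x\varphi$ and $D^2_x v^N_d\leq D^2_x\varphi$ let me replace $v^N_d$ by $\varphi$ in every term (in particular $-\lambda_i^{-1}(v^N_d)_{ii}\geq -\lambda_i^{-1}\varphi_{ii}$), giving the desired inequality at $(t_N,x_N)$. Sending $N\to\infty$ yields the conclusion at $(t_*,x_*)$.

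The one delicate point is the upgrade from ``viscosity solution'' in Theorem \ref{thm:finite} to enough classical regularity of $v^N$ to differentiate the equation pointwise. This is standard for the quasilinear uniformly parabolic equation satisfied by $v^N$ in dimension $N$ with convex, $\mathcal{C}^{1,1}$, bounded-below and coercive initial data, but it can be avoided altogether by a purely viscosity variant: test $v^N$ on $H_N$ against $\tilde\varphi(t,(x,x')):=\varphi(t,x)+C|x'|^2$ with $C$ large; the uniform Lipschitz control on $v^N$ in the $x'$-directions (Proposition \ref{prop:grad}) forces the maximizing point of $v^N-\tilde\varphi$ on $H_N$ to stay near $(t_N,(x_N,0))$, and the subsolution property of $v^N$ there reproduces the inequality above up to an error that vanishes as $C\to\infty$. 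Either way the structural argument, and the final bound $\epsilon_d/t_*$, are the same.
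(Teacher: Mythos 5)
Your proposal is correct and follows essentially the same route as the paper's own proof: pass to the finite-dimensional classical solutions $v^N$, take maximizing points $(t_N,x_N)$ of $v^N_d-\varphi$ converging to $(t_*,x_*)$, use the first- and second-order tangency conditions together with the pointwise equation for $v^N$, discard the non-negative tail $\sum_{i>d}(v^N_i)^2$, bound $\sum_{i>d}\lambda_i^{-1}v^N_{ii}$ by $\epsilon_d/t_N$ via \eqref{epsilond}, and let $N\to\infty$. The extra remarks on the strict-maximum reduction and on circumventing the classical regularity of $v^N$ are sensible additions but do not change the argument.
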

\begin{proof}
The proof of this result is much simpler than the previous one. Take $d > 1, \varphi \in \mathcal{C}^2(\R_+\times H_d)$ and $(t_*,x_*) \in (0,\infty)\times H_d$, a point of local maximum of $v_d - \varphi$. Once again, we can assume without loss of generality that this is a strict maximum. We consider the sequence of functions $(v^N)_{N \geq 0}$ as in the proof of Proposition \ref{prop:super}, and consider $(t_N,x_N)$ a point of maximum of 
\be
(t,x) \to v^N(t,(x,0)) - \varphi(t,x).
\ee
In particular, $\partial_t v^N(t_N,(x_N,0)) = \partial_t \varphi(t_N,x_N)$ and $v^N_i(t_N,(x_N,0)) = \varphi_i(t_N,x_N)$ for $i\leq N$. Since $\phi^N$ is a classical solution of \eqref{hjbN}, we deduce that
\be
\begin{aligned}
\partial_t \varphi(t_N,x_N) + |\varphi(t_N,x_N)|^2  - \sum_{i = 0}^d \lambda_i^{-1}\varphi_{ii}(t_N,x_N)& = \sum_{i > d}( \lambda^{-1}_i v^N_{ii}(t_N,(x_N,0)) - v^N_i(t_N,(x_N,0))^2)\\
& \leq \frac{\epsilon_d}{t_N},
\end{aligned}
\ee
where $(\epsilon_d)_{d > 1}$ is the sequence considered in \eqref{epsilond}. Since $(t_N,x_N)\to (t_*,x_*)$ as $N \to \infty$, we obtain the required result by passing to the limit in the previous inequality.
\end{proof}
We now summarize the previous properties in the following result.

\begin{Theorem}\label{thm:existence}
Assume that $\phi_0$ satisfies Hypothesis \ref{hypphi0}. Then there exists a function $\phi \in \mathcal{C}([0,\infty),\mathcal{C}(H))$ which satisfies
\begin{itemize}
\item For all $t> 0, x \in H$
\be
\psi(t,x) \leq \phi(t,x) \leq \psi(t,x) + \omega(t)
\ee
where $\psi$ is the solution of the deterministic problem and $\omega : \R_+ \to \R_+$ is continuous, depends only on $\phi_0$ and $\omega(0) = 0$. 
\item The function $v$ defined by \eqref{changev} is uniformly Lipschitz continuous on all compact sets of $(0,\infty)\times H$ and satisfies the conclusion of Propositions \ref{prop:super} and \ref{prop:sub}.
\end{itemize}
\end{Theorem}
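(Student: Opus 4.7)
The plan is to assemble the pieces already established in the previous sections; the theorem is essentially a packaging statement.

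First I construct the candidate solution. Applying Hypothesis \ref{hypphi0}, I invoke the previous proposition ensuring that each finite-dimensional initial condition $\phi_0^N$ obtained by restriction is $\mathcal{C}^{1,1}$, convex, bounded from below and coercive on $H_N$. Theorem \ref{thm:finite} then produces a unique viscosity solution $\phi^N$ of \eqref{hjbN}, and Theorem \ref{propN} endows the family $(\phi^N)_{N\geq 1}$ with the full list of quantitative estimates on second derivatives, gradients, and the sandwich \eqref{eq654} between $\psi^N$ and $\psi^N+\omega$, where crucially the modulus $\omega$ depends only on $\phi_0$ through the $\mathcal{C}^{1,1}$ norm bound and on assumption \eqref{hypA}, and not on $N$.

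Next I pass to the limit. I lift each $\phi^N$ to $\hat\phi^N$ on $H$ and apply Proposition \ref{prop:compact}: up to a subsequence, $\hat\phi^N$ converges uniformly on compact subsets of $[0,\infty)\times H$ to a function $\phi \in \mathcal{C}([0,\infty),\mathcal{C}(H))$, inheriting all the uniform estimates of Theorem \ref{propN} by pointwise passage to the limit, together with the initial condition $\phi|_{t=0}=\phi_0$. In particular, combining the pointwise convergence $\psi^N(t,x^N)\to \psi(t,x)$ from the convergence result on the deterministic approximants with the sandwich \eqref{eq654}, I pass to the limit to get the first claimed bound
\[
\psi(t,x) \leq \phi(t,x) \leq \psi(t,x)+\omega(t),
\]
with exactly the same $\omega$ as at finite $N$.

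I then turn to the function $v$ defined by the change of variable $v(t,Bx)=\phi(t,x)$, a priori only defined on $(0,\infty)\times B(H)$. Lemma \ref{lemma:second}, proved by differentiating the Lax--Oleinik representation of the deterministic companion problem around any $t_0>0$ and using the gradient bound from Proposition \ref{prop:grad}, provides an estimate $|\partial_t\phi(t,x)|\leq \omega(t)(|Bx|^2+1)$. Combined with the identity $|\nabla_x v(t,Bx)|^2=\langle A\nabla_x \phi(t,x),\nabla_x \phi(t,x)\rangle$ controlled via Proposition \ref{prop:grad}, this yields local Lipschitz continuity of $v$ on $B(H)$ with constants controlled only by $|x|$ and $t$, and therefore, by Lemma \ref{lemma:first}, a continuous extension to all of $(0,\infty)\times H$ that is uniformly Lipschitz on every compact set.

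Finally the viscosity formulation is inherited from the finite-dimensional solutions. Propositions \ref{prop:super} and \ref{prop:sub} are precisely the statements that $v$ satisfies the desired almost sub-/super-solution properties for every finite-dimensional projection, with the error $\epsilon_d/t$ controlled by \eqref{epsilond} and the regularizing term $\delta|x'|^2$. Since these propositions were themselves established by passing to the limit along the same sequence $(\phi^N)$ used above, applying them to the limit $\phi$ (equivalently, $v$) directly gives the conclusion. The main obstacle is conceptual rather than computational: one must make sure that the single subsequence extracted in Proposition \ref{prop:compact} is the one used in the proofs of Propositions \ref{prop:super} and \ref{prop:sub}, so all approximations, bounds, and viscosity test arguments are compatible in the limit; this is a standard diagonal-extraction bookkeeping step.
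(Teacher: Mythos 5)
Your proposal is correct and follows essentially the same route as the paper: Theorem \ref{thm:existence} is explicitly presented there as a summary of Proposition \ref{prop:compact} (limit of the finite-dimensional solutions with the sandwich bound), Lemmas \ref{lemma:first} and \ref{lemma:second} (Lipschitz extension of $v$), and Propositions \ref{prop:super} and \ref{prop:sub} (the relaxed viscosity properties). Your closing remark about using a single consistent subsequence throughout is the only bookkeeping the paper leaves implicit, and you handle it appropriately.
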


\subsection{Extension to more general initial conditions}
We now explain how to the previous result can be extended to more general initial conditions. Namely, we are interested in removing the $\mathcal{C}^{1,1}$ assumption. 
\begin{Theorem}\label{thm:extension}
Assume that $\phi_0$ is convex and satisfies $\phi_0(x) \to \infty$ when $|x|\to \infty$. Assume also that it can be approximated uniformly by $\mathcal{C}^{1,1}$ convex functions. Then there exists a function $\phi \in \mathcal{C}([0,\infty),\mathcal{C}(H))$ which satisfies
\begin{itemize}
\item For all $t> 0, x \in H$
\be
\psi(t,x) \leq \phi(t,x) \leq \psi(t,x) + \omega(t)
\ee
where $\psi$ is the solution of the deterministic problem and $\omega : \R_+ \to \R_+$ is continuous, depends only on $\phi_0$ and $\omega(0) = 0$. 
\item The function $v$ defined by \eqref{changev} is uniformly Lipschitz continuous on all compact sets of $(0,\infty)\times H$ and satisfies the conclusion of Propositions \ref{prop:super} and \ref{prop:sub}.
\end{itemize}
\end{Theorem}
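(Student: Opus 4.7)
\textit{(Plan for Theorem \ref{thm:extension}.)}
The approach is a pure approximation argument: approximate $\phi_0$ uniformly by $\mathcal{C}^{1,1}$ convex functions $(\phi_0^\epsilon)_{\epsilon > 0}$, invoke Theorem \ref{thm:existence} for each, and pass to the limit using the $L^\infty$-contraction built into the equation. By hypothesis such approximations exist; moreover, since $\|\phi_0 - \phi_0^\epsilon\|_\infty \to 0$ and $\phi_0$ is convex, coercive and bounded from below, $\phi_0^\epsilon$ inherits all these properties (coercivity follows from $\phi_0^\epsilon \geq \phi_0 - \|\phi_0 - \phi_0^\epsilon\|_\infty$), so Hypothesis \ref{hypphi0} holds for every $\epsilon > 0$. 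Let $\phi^\epsilon$ denote a function produced by Theorem \ref{thm:existence} from $\phi_0^\epsilon$, with associated deterministic solution $\psi^\epsilon$ and modulus $\omega^\epsilon$.

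The central observation is a comparison estimate at the finite-dimensional level: since constants are viscosity solutions of \eqref{hjbN}, the standard comparison principle gives, for every $N \geq 1$ and $\epsilon, \epsilon' > 0$,
\be
\|\phi^{N,\epsilon} - \phi^{N,\epsilon'}\|_\infty \;\leq\; \|\phi_0^{N,\epsilon} - \phi_0^{N,\epsilon'}\|_\infty \;\leq\; \|\phi_0^\epsilon - \phi_0^{\epsilon'}\|_\infty.
\ee
Passing to the limit $N \to \infty$ along the subsequences used in Proposition \ref{prop:compact} to construct $\phi^\epsilon$ and $\phi^{\epsilon'}$, the same bound persists pointwise, so $(\phi^\epsilon)_{\epsilon > 0}$ is Cauchy in $L^\infty([0,\infty) \times H)$. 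Denote its uniform limit by $\phi$. Since each $\phi^\epsilon$ is continuous on $[0,\infty) \times H$, so is $\phi$, and one checks that the deterministic solutions $\psi^\epsilon$ converge uniformly to $\psi$ (either via the explicit Lax--Oleinik formula or by the analogous comparison for \eqref{deterministic}). Proposition \ref{prop:psi0} applied to $\phi_0$ (convex, continuous, coercive) yields $\psi(0, \cdot) = \phi_0$, which together with the sandwich below gives $\phi(0, \cdot) = \phi_0$.

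To produce a modulus $\omega$ depending only on $\phi_0$, note that for every $\epsilon > 0$ and every $t > 0, x \in H$,
\be
\phi(t,x) - \psi(t,x) \;\leq\; \|\phi - \phi^\epsilon\|_\infty + (\phi^\epsilon(t,x) - \psi^\epsilon(t,x)) + \|\psi^\epsilon - \psi\|_\infty \;\leq\; 2\|\phi_0 - \phi_0^\epsilon\|_\infty + \omega^\epsilon(t),
\ee
while the lower bound $\psi^\epsilon \leq \phi^\epsilon$ passes directly to $\psi \leq \phi$ in the limit. Setting $\tilde\omega(t) := \sup_{0 < s \leq t,\, x \in H} (\phi(s,x) - \psi(s,x))_+$, the display above shows $\limsup_{t \to 0} \tilde\omega(t) \leq 2\|\phi_0 - \phi_0^\epsilon\|_\infty$ for every $\epsilon$; letting $\epsilon \to 0$ gives $\tilde\omega(0^+) = 0$, so $\omega := \tilde\omega$ (monotonized) is an admissible modulus depending only on $\phi_0$. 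Finally, the change of variable $v^\epsilon(t, Bx) = \phi^\epsilon(t,x)$ commutes with uniform convergence, so $v^\epsilon \to v$ uniformly on compact sets of $(0,\infty) \times H$; the Lipschitz bounds on $v^\epsilon$ (deduced from the gradient and time-derivative estimates of Theorem \ref{propN} and Lemma \ref{lemma:second}, which depend only on $\inf \phi_0^\epsilon$ and $\phi^\epsilon$, hence are uniform in $\epsilon$) pass to the limit. The main delicate point is stability of the sub- and super-solution properties of Propositions \ref{prop:super}--\ref{prop:sub} under this uniform convergence: the sub-solution part in Proposition \ref{prop:sub} is immediate from standard viscosity stability since it involves only the finite-dimensional section $v_d(t,x) = v(t,(x,0))$, while the super-solution part in Proposition \ref{prop:super} requires showing that the points of minimum in $x' \in H_d^\perp$ of $v^\epsilon(t,(x,x')) + \delta|x'|^2$ converge strongly to those of $v(t,(x,x')) + \delta|x'|^2$, which follows from the uniform strong convexity in $x'$ contributed by the $\delta$-term together with uniform convergence of $v^\epsilon$ on the ball $\{|x'|^2 \leq \delta^{-1}(\|v^\epsilon\|_\infty + 1)\}$. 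This is the one place where the uniform-in-$\epsilon$ control actually matters, and it is handled by exactly the same extraction as in the proof of Proposition \ref{prop:super}.
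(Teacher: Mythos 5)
Your proposal is correct and follows the same overall skeleton as the paper's proof: approximate $\phi_0$ uniformly by $\mathcal{C}^{1,1}$ convex functions satisfying Hypothesis \ref{hypphi0}, apply Theorem \ref{thm:existence} to each $\phi_0^\epsilon$, and recover the sandwich $\psi \leq \phi \leq \psi + \omega$ by combining $\phi_\epsilon \leq \psi_\epsilon + \omega_\epsilon \leq \psi + \omega_\epsilon + \epsilon$ with $\omega(t) = \inf_\epsilon(\omega_\epsilon(t) + \epsilon)$ — this last step is literally the paper's argument. Where you genuinely diverge is in how the convergence $\phi_\epsilon \to \phi$ is justified. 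The paper asserts it via the uniform-in-$\epsilon$ a priori estimates of Theorem \ref{propN}, i.e.\ by the same compactness/extraction mechanism as Proposition \ref{prop:compact}; you instead derive an $L^\infty$-contraction $\|\phi^{N,\epsilon} - \phi^{N,\epsilon'}\|_\infty \leq \|\phi_0^\epsilon - \phi_0^{\epsilon'}\|_\infty$ at the finite-dimensional level and conclude that the family is Cauchy. Your route buys convergence of the whole family (no subsequence) and a quantitative stability estimate that the paper only obtains later, in the proof of Theorem \ref{thm:unique}; the paper's route avoids having to invoke a comparison principle for unbounded solutions at this stage. You also address the stability of the conclusions of Propositions \ref{prop:super} and \ref{prop:sub} under the limit $\epsilon \to 0$, which the paper passes over in silence — your treatment of the minimizers in $x'$ via the uniform strong convexity of the $\delta|x'|^2$ term is the right fix and mirrors the extraction in Proposition \ref{prop:super}. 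Two small points to tighten: (i) when you pass the contraction bound to the limit $N \to \infty$, the subsequences used to build $\phi^\epsilon$ and $\phi^{\epsilon'}$ in Proposition \ref{prop:compact} need not coincide, so you should either perform a diagonal extraction over a countable set of $\epsilon$'s or note that the bound holds for every $N$ and hence for any common refinement; (ii) the comparison principle you invoke is for solutions that are unbounded from above, so strictly speaking it is of the type discussed in the remark before Theorem \ref{thm:unique} (Ishii, Alvarez) rather than the textbook bounded version — the paper's own uniqueness proof spells out how to localize the point of maximum using the bound by $\psi$.
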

\begin{proof}
For any $\epsilon> 0$, consider $\phi_0^{\epsilon}$ a convex $\mathcal{C}^{1,1}$ function such that $\|\phi_0 - \phi_0^\epsilon\|_{\infty} \leq \epsilon$. For all $\epsilon> 0$, $\phi_0^{\epsilon}$ satisfies Hypothesis \eqref{hypphi0}. Hence, from Theorem \ref{thm:existence}, there exists a function $\phi_{\epsilon}$ and a modulus of continuity $\omega_{\epsilon}$ satisfying the conclusions of Theorem \ref{thm:existence}.\\

The argument we want to make now is that taking the limit $\epsilon\to 0$, the function $\phi_{\epsilon}$ converges toward some function $\phi$ which satisfies the requirements. As all the a priori estimates of Theorem \ref{propN} are uniform in $\epsilon$, we focus on proving a uniform upper bound involving $\psi$, the solution of the deterministic problem with initial condition $\phi_0$. We denote by $\psi_{\epsilon}$ the solution of the deterministic problem with initial condition $\phi_0^{\epsilon}$. For any $\epsilon> 0$, it holds that for all $t \geq 0,x \in H$
\be
\begin{aligned}
 \phi_{\epsilon}(t,x)& \leq \psi_{\epsilon}(t,x) + \omega_{\epsilon}(t)\\
 & \leq \psi(t,x) + \omega_{\epsilon}(t) + \epsilon.
 \end{aligned}
\ee
Defining $\omega(t) = \inf_{\epsilon} \omega_{\epsilon}(t) + \epsilon$, we finally obtain the required estimate.
\end{proof}

\subsection{Uniqueness}
We can provide the following result of uniqueness.
\begin{Theorem}\label{thm:unique}
Under the assumptions of Theorem \ref{thm:extension}, there exists a unique function $\phi \in\mathcal{C}([0,\infty),\mathcal{C}(H))$ which satisfies the conclusions of Theorem \ref{thm:extension}.
\end{Theorem}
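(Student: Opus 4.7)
The plan is to prove a comparison principle for the transformed equation \eqref{hjbv2}: given two functions $\phi_1,\phi_2$ satisfying the conclusions of Theorem \ref{thm:extension}, with associated $v_1,v_2$ defined via \eqref{changev}, I would show $v_1\equiv v_2$ on $(0,\infty)\times H$. A key preliminary observation is that, since both $\phi_i$ are sandwiched in \eqref{eq684} by the same function $\psi$ and the same modulus $\omega$ (which depends only on $\phi_0$), one automatically has $\|v_1(t,\cdot)-v_2(t,\cdot)\|_\infty\leq \omega(t)\to 0$ as $t\to 0$. This supplies the near-initial boundary datum needed to run a comparison argument on $[t_0,T]\times H$ for arbitrary $T>0$ and arbitrarily small $t_0>0$.

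The heart of the argument is a doubling-of-variables scheme carried out in the finite-dimensional projections used in Propositions \ref{prop:sub} and \ref{prop:super}. Fix $d\geq 1$ and $\delta>0$, and set $v_{1,d}(t,x):=v_1(t,(x,0))$ together with $v_2^{d,\delta}(t,x):=\inf_{x'\in H_d^\perp}\{v_2(t,(x,x'))+\delta|x'|^2\}$. By Proposition \ref{prop:sub} applied to $v_1$, the function $v_{1,d}$ is a viscosity subsolution on $(0,\infty)\times H_d$ of the finite-dimensional HJB equation up to an error $\epsilon_d/t$; by Proposition \ref{prop:super} applied to $v_2$, the function $v_2^{d,\delta}$ is a viscosity supersolution of the same equation up to an error $-\delta^2|x_\ast'|^2$. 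Since $v_2^{d,\delta}(t,x)\leq v_2(t,(x,0))$ (take $x'=0$), a comparison of the form $v_{1,d}\leq v_2^{d,\delta}+\text{error}$, together with the density of $\bigcup_d(H_d\oplus\{0\})$ in $H$, will deliver $v_1\leq v_2$ in the limit.

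To obtain the finite-dimensional comparison I would maximize over $[t_0,T]^2\times H_d^2$ the functional
\begin{equation*}
\Phi(t,x,s,y)=v_{1,d}(t,x)-v_2^{d,\delta}(s,y)-\frac{|x-y|^2}{2\alpha}-\frac{(t-s)^2}{2\beta}-\eta\bigl(|x|^2+|y|^2\bigr)-\gamma\Bigl(\tfrac{1}{t}+\tfrac{1}{s}\Bigr)
\end{equation*}
and apply Ishii's lemma inside $H_d$ at the maximum point $(t_\alpha,x_\alpha,s_\alpha,y_\alpha)$. The quadratic Hamiltonian contributions $|\nabla u|^2$ cancel, since both test-gradients equal $(x_\alpha-y_\alpha)/\alpha$ up to $O(\eta)$, and the weighted-trace contributions are non-positive by the Ishii matrix inequality. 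The resulting inequality has right-hand side bounded by $\epsilon_d/t_\alpha+\delta^2|x_\ast'|^2+O(\eta)+O(\gamma)+O(\beta)$; the inf-convolution penalty forces $\delta|x_\ast'|^2\leq v_2(s_\alpha,(y_\alpha,0))-\inf\phi_0$, which is uniformly bounded on the compact sets where $y_\alpha$ lives thanks to the growth estimates of Proposition \ref{prop:compact}, so $\delta^2|x_\ast'|^2\leq C\delta$. Passing to the limits $\eta,\gamma,\alpha,\beta\to 0$ first, then $\delta\to 0$, $d\to\infty$ and finally $t_0\to 0$, yields $v_1(t,(x,0))\leq v_2(t,(x,0))$ for every $t>0$ and every $x\in H_d$; continuity of the $v_i$ and density of $\bigcup_d(H_d\oplus\{0\})$ in $H$ then upgrade this to $v_1\leq v_2$ on $(0,\infty)\times H$, and the symmetric argument concludes that $v_1\equiv v_2$.

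The main obstacle will be the careful bookkeeping of the two mismatched error sources produced by the non-standard sub/super-solution properties: the $\epsilon_d/t$ coming from neglecting the high-frequency second-order contributions in the slice, and the $\delta^2|x_\ast'|^2$ coming from the inf-convolution regularization. One must check that the maximum point $y_\alpha$ stays in a compact set independent of the auxiliary parameters so that the bound $\delta^2|x_\ast'|^2\leq C\delta$ is uniform, and that the prescribed order of limits is consistent so that every error term vanishes. The finite-dimensional Ishii lemma applied inside $H_d$ is otherwise standard, and the penalties $\eta(|x|^2+|y|^2)$ and $\gamma(1/t+1/s)$ are chosen precisely to compensate for the lack of coercivity in $x$ and to keep the maximum off the initial slice.
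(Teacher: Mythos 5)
Your overall architecture differs from the paper's in one essential respect: you run a symmetric doubling-of-variables argument (with Ishii's lemma in $H_d$) directly between two arbitrary solutions $v_1,v_2$, whereas the paper never compares two unknown solutions to each other. Instead it compares each candidate solution $\tilde v$ against the explicitly constructed classical finite-dimensional solutions $v^N$ of \eqref{hjbN}, in two asymmetric steps (once using the slice/sub-solution property of $\tilde v$, once using the inf-convolution/super-solution property), so that the equation can be evaluated pointwise at extremum points without any Ishii machinery, and so that the lift of $v^N$ — being \emph{constant} in the $H_N^\perp$ directions and converging to $v$ uniformly on compacts — lets the boundary term at $t=\gamma$ be converted into a bound on $\tilde v - v$ at a \emph{single} point of $H$, which is then killed by the sandwich \eqref{eq684}.

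This is where your scheme has a genuine gap: the comparison on $[t_0,T]$ between $v_{1,d}(t,x)=v_1(t,(x,0))$ and $v_2^{d,\delta}(t,x)=\inf_{x'}\{v_2(t,(x,x'))+\delta|x'|^2\}$ requires controlling the difference on the initial slice $t=t_0$, and this difference is \emph{not} bounded by $\omega(t_0)$. Indeed $v_{1,d}(t_0,x)-v_2^{d,\delta}(t_0,x)\leq \omega(t_0)+\sup_{x'}\{v_1(t_0,(x,0))-v_1(t_0,(x,x'))-\delta|x'|^2\}$, and the supremum on the right is the gap between $v_1$ at $x'=0$ and its partial Moreau envelope in the orthogonal directions; for a generic convex coercive initial datum it is strictly positive, it \emph{increases} as $\delta\to 0$, and it does not vanish as $t_0\to 0$ or $d\to\infty$. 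Your claim that $\|v_1(t,\cdot)-v_2(t,\cdot)\|_\infty\leq\omega(t)$ ``supplies the near-initial boundary datum'' compares the two solutions at the same point of $H$, but your doubling functional compares $v_1$ at $(x,0)$ with $v_2$ at $(y,x_*')$, and nothing in your penalization forces $x_*'\to 0$. A secondary (fixable, but real) issue is localization: on $H_d$ the functions grow like $\lambda_d|x|^2$, so the penalty $\eta(|x|^2+|y|^2)$ does not make $\sup\Phi$ finite for small $\eta$; the paper instead exploits that the difference $\tilde v_N - v^N$ is bounded above by $\omega(T)$ (again because one of the two functions is $v^N$, itself sandwiched by $\psi^N$), together with a coercive super-solution perturbation in the style of Alvarez, to localize and to guarantee existence of the extremum. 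I would recommend restructuring your proof around the approximants $v^N$ rather than around a two-solution doubling.
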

\begin{Rem}
The following proof is based on comparison principles for the finite dimensional equation \eqref{hjbN}. As the sub and super-solution considered are not bounded, this kind of results are not exactly the one found in usual references on viscosity solutions such as in Crandall et al. \citep{crandall1992user} or Fleming and Soner \citep{fleming2006controlled}. They rely on other techniques which despite not being as famous as the usual one, are nonetheless classical. We refer for instance to Ishii \citep{ishii1984uniqueness} and Alvarez \citep{alvarez1997bounded} for first order equations, or to Alvarez \citep{alvarez1996quasi} for second order problems.
\end{Rem}
\begin{proof}\textit{(Of Theorem \ref{thm:unique}.)}
Assume first that $\phi_0$ satisfies Hypothesis \ref{hypphi0}. Consider $\phi$ the solution constructed in the previous section as the limit of the solution $\phi^N$ of the finite dimensional problems and $\tilde{\phi}$ another solution. We denote by respectively $v$ and $\tilde{v}$ the associated functions given by the change of variable. Denote also $v^N(t,Bx) = \phi^N(t,x)$.

Take $N \geq 1$ and define the function $\tilde{v}_N (t,x) := \tilde{v}(t,(x,0))$ for $t \geq 0, x \in H_N$. We want to apply a comparison result on $\tilde{v}_N + \epsilon_N \log(t)$ and $v^N $. Recall that $v^N$ is a classical solution of 
\be\label{eq763}
\partial_t u + |\nabla_x u|^2 - \sum_{i = 0}^N\lambda^{-1}_i u_{ii} = 0 \text{ in } (0,\infty)\times H_N,
\ee

From Proposition \ref{prop:sub}, we can apply classical comparison principle results to deduce that, on any set of the form $[\gamma,T]\times H_N$ for $0<\gamma<T$ the supremum of 
\be
(t,x) \to \tilde{v}_N(t,x) - v^N(t,x) + \epsilon_N \log(t)
\ee
is reached at $t = \gamma$. Indeed, the localization of the point of maximum can be achieved easily since 
\be
\begin{aligned}
\tilde{v}_N(t,x) - v^N(t,x) &\leq \psi(t,(x,0)) - \psi^N(t,x) + \omega(t)\\
&\leq \omega(t).
\end{aligned}
\ee
From this uniform upper bound on the difference of the two functions, such a result is indeed classical. It is usually obtained by adding to $v^N$ the function $\beta \sqrt{|x|^2 + 1}$ and by letting $\beta$ goes to $0$. We refer to Alvarez \citep{alvarez1996quasi} and the reference therein for more involved comparison result between unbounded viscosity solutions\footnote{The paper \citep{alvarez1996quasi} is concerned with a stationary equation, but since we are only interested in localizing the point of maximum in the space variable, all the techniques apply instantly to our equation.}. Recalling that we have a uniform control of the solution near $t= 0$ through the value of the deterministic problem, we deduce in fact that the previous supremum vanishes as $N \to \infty$ (for instance by choosing $\gamma = \epsilon_n $). Hence we obtain that $v \geq \tilde{v}$.\\

Assume now that the previous inequality is strict, that is, that there exists $t_*>0,x_* \in H$ such that $v(t_*,x_*) > \tilde{v}(t_*,x_*)$. From the regularity of $\tilde{v}$ and the convergence of $(v^N)_{N \geq 0}$, we deduce in fact that there exists $T,\kappa,\bar{\delta},\rho > 0$ and $\bar{N} \geq 1$ such that for any $\delta \in (0,\bar{\delta}), \gamma \in (0,\frac{T}{2})$ and $N \geq \bar{N}$,
\be\label{prob899}
\inf \bigg\{ \tilde{v}(t,(x,x')) - v^N(t,x)  + \delta|x'|^2 + \rho t\bigg\} \leq -\kappa,
\ee
where the infimum is taken over $t\in [\gamma,T], x \in H_N, x' \in H_N^{\perp}$. Let us assume first that the minimum is reached at some point $(t^*,x^*,(x')^*)$. Suppose first that $t^*> \gamma$. Using Proposition \ref{prop:super} we obtain that
\be\label{prop903}
-\rho + \partial_t v^N(t^*,x^*) + \frac 12 |v^N(t^*,x^*)|^2 - \sum_{i = 0}^N \lambda_i^{-1}v^N_{ii}(t^*,x^*)\geq  - \delta^2|(x')^*|^2,
\ee
Since $v^N$ is classical solution of \eqref{eq763}, we deduce that
\be
2 \rho \leq \delta^2|(x')^*|^2.
\ee
Remark now that, by construction, $\delta |(x')^*|^2$ is bounded, hence taking $\delta$ as small as we want, we obtain that the right hand side of the previous inequality tends to $0$ as $\delta \to 0$, which is a contradiction. Hence, we necessarily have that $t^* = \gamma$. The value at the infimum then implies
\be
\tilde{v}(\gamma,(x^*,(x')^*)) - v^N(\gamma,x^*) \leq -\kappa.
\ee
From this, taking $N$ as large as we want, we obtain that $\|\tilde{v}(\gamma,\cdot) - v(\gamma,\cdot)\|_{\infty} \geq \kappa$, which contradicts the behaviour of $\phi$ near $t = 0$, taking $\gamma$ sufficiently small.\\

Thus we are able to prove that $v \leq \tilde{v}$ given that that we can prove the existence of a point of minimum in \eqref{prob899}. The general case is generally established through the perturbation of $v^N$ by a super-solution of the equation which is infinite outside a compact set, such as in Lemma 3.2 in \citep{alvarez1996quasi}. This Lemma is set for the case $\lambda_i = 1$ for the stationary equation. We leave to the reader the simple computation which consist in checking that such super-solution can be  constructed as in Lemma 3.2 in \citep{alvarez1996quasi} by replacing $|x|^2$ by $\sum_i \lambda_i x_i^2$.\\

To conclude this proof, it suffices to remark that, if $\phi_0$ is not $\mathcal{C}^{1,1}$ but can be approximated uniformly by $\mathcal{C}^{1,1}$ functions, the previous argument shows that the difference between $\tilde{\phi}$ and $\phi_{\epsilon}$, the solution given by Theorem \ref{thm:existence} of the problem associated to a $\mathcal{C}^{1,1}$ initial condition which is at distance $\epsilon$ of $\phi_0$, is at most of $\epsilon$. Hence the result is also proven by considering $\epsilon$ as small as necessary.

\end{proof}

\section*{Acknowledgments}
The three authors acknowledge a partial support from the Chair FDD from Institut Louis Bachelier and from the Lagrange Mathematics and Computation Research Center.
\bibliographystyle{plainnat}
\bibliography{bibremarks}

\end{document}